\newcommand{\calL}{{\mathcal L}}
\newcommand{\calM}{{\mathcal M}}
\newcommand{\calH}{{\mathcal H}}
\newcommand{\calD}{{\mathcal D}}
\newcommand{\calS}{{\mathcal S}}
\newcommand{\calU}{{\mathcal U}}
\newcommand{\N}{{\mathbb N}}
\newcommand{\Q}{{\mathbb Q}}
\newcommand{\Frac}[2]{\displaystyle \frac{#1}{#2}}
\newcommand{\Sum}[2]{\displaystyle{\sum_{#1}^{#2}}}
\newcommand{\Prod}[2]{\displaystyle{\prod_{#1}^{#2}}}
\newcommand{\into}{{\ \rightarrow\ }}
\def\pol#1{\langle #1 \rangle}
\def\series#1{\langle\!\langle #1 \rangle\!\rangle}
  \def\ncp#1#2{#1\langle #2\rangle}
\def\ncs#1#2{#1\langle\langle #2\rangle\rangle}
\def\Lyn{{\mathcal Lyn}}
\def\shuffle{\mathop{_{^{\sqcup\!\sqcup}}}} 
\def\ncp#1#2{#1\langle #2\rangle}
\gdef\stuffle{\;%
  \setlength{\unitlength}{0.0125cm}%
  \begin{picture}(20,10)(220,580) 
  \thinlines 
  \put(220,592){\line( 0,-1){ 10}} 
  \put(220,582){\line( 1, 0){ 20}} 
  \put(240,582){\line( 0, 1){ 10}} 
  \put(230,592){\line( 0,-1){ 10}} 
  \put(225,587){\line( 1, 0){ 10}} 
  \end{picture}\; 
}
\newcommand{\ad}{\operatorname{ad}}
\def\pol#1{\langle #1 \rangle}
\def\up#1{\raise 1ex\hbox{\footnotesize#1}}
\newcommand{\calR}{{\cal R}}
\def\Sum{\displaystyle\sum}
\def\Prod{\displaystyle\prod}
\def\Frac{\displaystyle\frac}
\def\path{\rightsquigarrow}
\def\bv{\mid}
\gdef\minishuffle{{\scriptstyle \shuffle}}  
\gdef\ministuffle{{\scriptstyle \stuffle}}
\def\scal#1#2{\langle #1\bv#2 \rangle}
\def\2#1{\ifnum#1<10 0\fi\the#1}
\xdef\isodayandtime{
{\2\day-\2\month-\the\year\space\2{\count0}:%
\2{\count2}}}
\title{Dual bases for non commutative symmetric and quasi-symmetric functions via monoidal factorization}
\titlerunning{Dual bases for non commutative symmetric and quasi-symmetric functions via monoidal factorization}
\author{G.H.E. Duchamp$^{1\,2}$, L. Kane$^{2}$, V. Hoang Ngoc Minh$^{3\,2}$, C. Tollu$^{1\,2}$}
\date{}
\authorrunning{G.H.E. Duchamp, L. Kane, V. Hoang Ngoc Minh, C. Tollu}
\institute{$^1$Universit\'e Paris XIII, 1, 93430 Villetaneuse, France.\\
$^2$LIPN - UMR 7030, CNRS, 93430 Villetaneuse, France.\\
$^3$Universit\'e Lille II, 1, Place D\'eliot, 59024 Lille, France.}
\begin{document}

\maketitle

\begin{abstract}
In this work, an effective construction, via Sch\"utzenberger's monoidal factorization, of dual  bases for the non commutative symmetric and quasi-symmetric functions is proposed.


\smallskip

{\bf Keywords} : Non commutative symmetric functions, Quasi-symmetric functions, Lyndon words, Lie elements, Monoidal factorization, Transcendence bases.
\end{abstract}

\section{Introduction}

Originally, ``symmetric functions'' are thought of as ``functions of the roots of some polynomial'' \cite{GKLLRT}.

The factorization formula 
\begin{eqnarray}
P(x)&=&\Prod_{\alpha\in \mathcal{O}(P)} (X-\alpha)=\sum_{j=0}^n X^{n-j}(-1)^j \Lambda_j(\mathcal{O}(P)) 
\end{eqnarray}
where $\mathcal{O}(P)$ is the (multi-)set of roots of $P$ (a polynomial) invites to consider $\Lambda_j(?)$ as a ``multiset (endo)functor''\footnote{We will not touch here on this categorical aspect.} rather than a function $K^n\into K$ ($K$ is a field where $P$ splits). But, here, $\Lambda_k(X)=0$ whenever $|X|<k$ and one would like to get the universal
formulas {\it i.e.} which hold true whatever the cardinality of $|X|$. This set of formulas is obtained as soon as the alphabet is infinite and, there, this calculus appears as an art of computing symmetric functions without using any variable\footnote{see http://mathoverflow.net/questions/123926/reference-request-lascouxs-formulas-for-chern-classes-of-tensor-products-and-sy/124172\#124172}.    
With this point of view, one sees that the algebra of symmetric functions comes equipped with many additional structures \cite{GKLLRT,DKKT,KT4,KT5,KLT} (comultiplications, $\lambda$-ring, transformations of alphabets, internal product, \ldots). For our concern here, the most important of these features is the fact that the (commutative) Hopf algebra of symmetric functions is self-dual.

At the cost of losing self-duality, features of the (Hopf) algebra of symmetric functions carry over to the noncommutative level \cite{GKLLRT}. This loss of self-duality has however a merit : allowing to separate the two sides in the factorization of the diagonal series\footnote{Sch\"utzenberger's monoidal factorization \cite{berstel_perrin,lothaire,lyndon}.}, thus giving a meaning to what could be considered a complete system of local coordinates for the Hausdorff group of the stuffle Hopf algebra. Indeed, the elements of the Hausdorff group of the (shuffle or stuffle) algebras excatly are, through the isomorphism $\ncs{A}{Y}\simeq (\ncp{A}{Y})^*$, the characters of the algebra. Then, applying $S\otimes\mathrm{Id}$ ($S\in\mathrm{Haus}(\calH)$) to the factorization
\begin{eqnarray}\label{resolution}
\sum_{w\in Y^*}w\otimes w&=&\prod_{l\in\Lyn Y}^{\searrow}\exp(s_l\otimes p_l)
\end{eqnarray}
and using the fact that $S$ is a (continuous) character, one gets a decomposition of $S$ through this complete system of local coordinates~:
\begin{eqnarray}\label{series}
S=\sum_{w\in Y^*}\scal{S}{w}\;w=\prod_{l\in\Lyn Y}^{\searrow}\exp(\scal{S}{s_l}\;p_l).
\end{eqnarray}
This fact is better understood when one considers Sweedler's dual of the  (shuffle of stuffle) Hopf algebra $\calH$, which contains as well $\mathrm{Haus}(\calH)$ and its Lie algebra, the space of infinitesimal characters. Such a character is here a series $T$ such that 
(as a linear form) 
\begin{eqnarray}
\Delta_*(T)=T\otimes \epsilon+\epsilon\otimes T
\end{eqnarray}
and one sees from this definition that such a series, as well as the characters, satisfies an identity of the type
\begin{eqnarray}
\Delta_*(S)=\sum_{i=1}^N S_i^{(1)}\otimes S_i^{(2)} 
\end{eqnarray}
for some finite double family $(S_i^{(1)},S_i^{(2)})_{1\leq i\leq N}$. Then in (\ref{series}), the character $S$ is factorized as an (infinite) product of {\it elementary} characters\footnote{which are exponentials of rank one infinitesimal characters}. This shows firstly, one can reconstruct a character from its projection onto the free Lie algebra\footnote{the map $S\mapsto\sum_{l\in\Lyn Y}\scal{S}{s_l}\;p_l$ is the projection onto the free Lie algebra parallel to the space generated by the non-primitive elements of the PBW basis} and secondly, we have at hand a resolution of unity from the process
\begin{eqnarray}
\mbox{character}\rightarrow\mbox{projection}\rightarrow\mbox{(coordinates) splitting}\rightarrow\mbox{exponentials}\rightarrow\mbox{infinite product}
\end{eqnarray}
and the key point of this resolution is exactly the system of coordinate forms provided by the dual family of any PBW homogeneous basis.  

This paper is devoted to a detailed exposition of the machinery and morphisms surrounding this resolution (Equation (\ref{resolution}))
and it is structured as follows~:
in Section \ref{background}, we give a reminder on noncommutative symmetric and quasi-symmetric functions,
in Section \ref{factorization}, we focus on the combinatorial aspects of the quasi-shuffle Hopf algebra will be introduced to obtain, via Sch\"utzenberger's monoidal factorization, a pair of bases in duality for the noncommutative symmetric and quasi-symmetric functions, encoded by words.

\section{Background}\label{background}

\subsection{Some notations and statistics about compositions}\label{notations}

For any {\it composition} $I=(i_1,\ldots,i_k)$ of strictly positive integers\footnote{{\it i.e.} $I$ is an element of the monoid $(\N_+)^*$ and the empty composition will be denoted here by $\emptyset$.}, called the {\it parts} of $I$, 
the mirror image of $I$, denoted by $\tilde I$, is the composition $ (i_k,\ldots,i_1)$. Let $I=(i_1,\ldots,i_k)\in(\N_+)^*$, the {\it length} and the  {\it weight} of $I$ are defined respectively as the numbers $l(I)=k$ and $w(I)=i_1+\ldots+ i_k$. The last part and the product of the partial sum of the entries of $I$ are defined respectively as the numbers $lp(I)=i_k$ and $\pi_u(I)=i_1(i_1+i_2)\ldots(i_1+\ldots+ i_k)$. One defines also
\begin{eqnarray}
\pi(I)=\prod_{p=1}^ki_p&\mbox{and}&sp(I)=\pi(I)l(I)!.
\end{eqnarray}
Let $J$ be a composition which is finer than $I$ and let $J=(J_1,\ldots,J_k)$ be the decomposition of $J$ such that, for any $p=1,\ldots,k,w(J_p)=i_p$. One defines
\begin{eqnarray}
l(J,I)=\Prod_{i=1}^kl(J_i),\quad lp(J,I)=\Prod_{i=1}^klp(J_i),\quad\pi_u(J,I)=\Prod_{i=1}^k\pi_u(J_i),\quad sp(J,I)=\Prod_{i=1}^ksp(J_i).
\end{eqnarray}

\subsection{Noncommutative symmetric functions}

Let $\bf k$ be a commutative $\Q$-algebra.
The algebra of noncommutative symmetric functions, denoted by
${\bf Sym_k}=({\bf k}\langle S_1,S_2,\ldots\rangle,\bullet,1)$, introduced in \cite{GKLLRT}, is the free associative algebra generated by an infinite sequence $\{S_n\}_{n\ge1}$ of non commuting inderminates also called {\it complete} homogenous symmetric functions. Let $t$ be another variable commuting with all the $\{S_n\}_{n\ge1}$. Introducing the ordinary generating series
\begin{eqnarray}\label{sigma(t)}
\sigma(t)&=&1+\sum_{n\ge1}S_nt^n,
\end{eqnarray}
other noncommutative symmetric functions can be derived by the following relations
\begin{eqnarray}\label{psi(t)}
\lambda(t)=\sigma(-t)^{-1},&\sigma(t)=\exp(\Phi(t)),
&\Frac{d}{dt}\sigma(t)=\sigma(t)\psi(t)=\psi^*(t)\sigma(t),
\end{eqnarray}
where $\Phi,\lambda,\psi$ are respectively the following ordinary generating series
\begin{eqnarray}
\Phi(t)=\Sum_{n\ge1}\Phi_n\frac{t^n}{n},
&\lambda(t)=1+\Sum_{n\ge1}\Lambda_nt^n,&\psi(t)=\Sum_{n\ge1}\Psi_nt^{n-1}.
\end{eqnarray}
The noncommutative symmetric functions  $\{\Lambda_n\}_{n\ge1}$ are called {\it elementary} functions. The elements 	$\{\Psi_n\}_{n\ge1}$ (resp. $\{\Phi_n\}_{n\ge1}$) are called {\it power sums of the first kind} (resp. {\it second kind}).

Let $I = (i_1,\ldots,i_k)\in(\N_+)^*$, one defines the products of complete and elementary symmetric functions,
and the products of power sums  as follows \cite{GKLLRT}
\begin{eqnarray}
S^I =S_{i_1}\ldots S_{i_k},\quad\Lambda^I =\Lambda_{i_1}\ldots\Lambda_{i_k},
\quad\Psi^I =\Psi_{i_1}\ldots\Psi_{i_k},\quad\Phi^I =\Phi_{i_1}\ldots\Phi_{i_k}.\label{11}
\end{eqnarray}
and it is established that
\begin{eqnarray}
S^I=\Sum_{J\succeq I}(-1)^{l(J)-w(I)}\Lambda^J
&\mbox{and}&
\Lambda^I=\Sum_{J\succeq I}(-1)^{l(J)-w(I)}S^J.\label{10}\\
S^I=\Sum_{J\succeq I}\frac{\Psi^J}{\pi_u(J,I)}
&\mbox{and}&
\Psi^I=\Sum_{J\succeq I}(-1)^{l(J)-l(I)}lp(J,I)S^J,\label{12}\\
S^I=\Sum_{J\succeq I}\frac{\Phi^J}{sp(J,I)}
&\mbox{and}&
\Phi^I=\Sum_{J\succeq I}(-1)^{l(J)-l(I)}\frac{\pi(I)}{l(J,I)}S^J,\label{13}\\
\Lambda^I=\Sum_{J\succeq I}(-1)^{w(J)-l(I)}\frac{\Psi^J}{\pi_u(\tilde J,\tilde I)}
&\mbox{and}&
\Psi^I=\Sum_{J\succeq I}(-1)^{w(I)+l(J)}lp(\tilde J,\tilde I)\Lambda^J,\\
\Lambda^I=\Sum_{J\succeq I}(-1)^{w(J)-l(I)}\frac{\Phi^J}{sp(J,I)}
&\mbox{and}&
\Phi^I=\Sum_{J\succeq I}(-1)^{w(J)-l(I)}\frac{\pi(I)}{l(J,I)}\Lambda^J,
\end{eqnarray}
The families $\{S^I\}_{I\in(\N_+)^*}$, $\{\Lambda^I\}_{I\in(\N_+)^*},\{\Psi^I\}_{I\in(\N_+)^*}$ and $\{\Phi^I\}_{I\in(\N_+)^*}$ are then homogeneous bases of ${\bf Sym_k}$.
Recall that $S^\emptyset=\Lambda^\emptyset=\Psi^\emptyset=\Phi^\emptyset=1$.

The ${\bf k}$-algebra ${\bf Sym_k}$ possesses a finite-dimensional grading by the weight function defined, for any composition $I = (i_1,\ldots,i_k)$,  by the number $w(S_I) = w(I)$. Its homogeneous component of weight $n$ (free and finite-dimensional) will be denoted by ${\bf Sym_k}_n$ and one has
\begin{eqnarray}
{\bf Sym_k}&=&{\bf k}1_{\bf Sym_k}\oplus\bigoplus_{n\ge1}{\bf Sym_k}_n.
\end{eqnarray}

One can also endow $\bf Sym_k$ with a structure of Hopf algebra, the coproduct $\Delta_{\star}$ being defined by one of the following equivalent formulae, with the convention that $S_0=S^\emptyset=1$ and $\Lambda_0=\lambda^\emptyset=1$ \cite{GKLLRT}
\begin{eqnarray}
\Delta_{\star}S_n=\sum_{i=0}^nS_i\otimes S_{n-i}&\mbox{and}&
\Delta_{\star}\Lambda_n=\sum_{i=0}^n\Lambda_i\otimes\Lambda_{n-i},\\
\Delta_{\star}\Psi_n=1\otimes\Psi_n+\Psi_n\otimes1&\mbox{and}&
\Delta_{\star}\Phi_n=1\otimes\Phi_n+\Phi_n\otimes1.
\end{eqnarray}
In other words, for the coproduct $\Delta_{\star}$, the power sums of the first kind $\{\Psi_n\}_{n\ge1}$ and of the second kind  $\{\Phi_n\}_{n\ge1}$ are primitive. The noncommutative symmetric function $S_1=\Lambda_1$ is primitive but  $\{S_n\}_{n\ge2}$ and $\{\Lambda_n\}_{n\ge2}$ are neither primitive nor group-like. Moreover, by (\ref{10}),  (\ref{12}) and (\ref{13}), one has
\begin{eqnarray}
S_1=\Lambda_1=\Phi_1=\Psi_1.
\end{eqnarray}

With the concatenation, the coproduct $\Delta_{\star}$ and the counit $\epsilon$ defined by
\begin{eqnarray}
\forall I\in(\N_+)^*,&&\epsilon(S^I)=\scal{S^I}{1},
\end{eqnarray}
one gets the bialgebra, $({\bf k}\langle S_1,S_2,\ldots\rangle,\bullet,1,\Delta_{\star},\epsilon)$, over the $\bf k$-algebra ${\bf Sym_k}$. This algebra, $\N$-graded by the weight is, as we will see in Theorem \ref{isomorphisms}, the {\it concatenation} Hopf algebra.

\subsection{Quasi-symmetric functions}

Let us consider also an infinite sequence $\{M_n\}_{n\ge1}$ of non commuting inderminates generating the free associative algebra\footnote{We here use the symbol $\equiv$ to warn the reader that the structure of free algebra is used to construct the basis of ${\bf QSym_k}$ which will be later free {\it as a commutative algebra} (with the stuffle product) and by no means as a noncommutative algebra (with the concatenation product).} 
${\bf QSym_k}\equiv(\ncp{{\bf k}}{M_1,M_2,\ldots},\bullet,1)$  and define the elements $\{M_I\}_{I\in(\N_+)^*}$ as follows
\begin{eqnarray}
M_{\emptyset}=1&\mbox{and}&\forall I=(i_1,\ldots,i_k)\in(\N_+)^*,\quad M_I =M_{i_1}\ldots M_{i_k}.
\end{eqnarray}
The elements $\{M_I\}_{I\in(\N_+)^*}$ of ${\bf QSym_k}$ are also called the {\it monomial quasi-symmetric} functions.
They are homogeneous polynomials of degree $w(I)$. This family is then an homogeneous basis of ${\bf QSym_k}$.

With  the pairing
\begin{eqnarray}
\forall I,J\in(\N_+)^*,&&\scal{S^I}{M_J}_{\mathrm{ext}}=\delta_{I,J},
\end{eqnarray}
one constructs the bialgebra dual to ${\bf Sym_k}$, $({\bf k}\langle M_1,M_2,\ldots\rangle,\star,1,\Delta_{\bullet},\varepsilon)$, over the $\bf k$-algebra ${\bf QSym_k}$. Here,
\begin{enumerate}
\item the coproduct $\Delta_{\bullet}$ is defined by
\begin{eqnarray}
\forall I\in(\N_+)^*,&&\Delta_{\bullet}(M_I)=\sum_{I_1,I_2\in(\N_+)^*,I_1.I_2=I}M_{I_1}\otimes M_{I_2},
\end{eqnarray}
\item the counit $\varepsilon$ is defined by
\begin{eqnarray}
\forall I\in(\N_+)^*,&&\varepsilon(M_I)=\scal{M_I}{1},
\end{eqnarray}
\item the product $\star$ is the commutative product associated to the coproduct $\Delta_{\star}$ and is defined, for any composition $I\in(\N_+)^*$, by
\begin{eqnarray}
M_I\star M_{\emptyset}=M_{\emptyset}\star M_I=M_I
\end{eqnarray}
and for any composition $I=(i,I')$ and $J=(j,J')\in(\N_+)^*$
\begin{eqnarray}
M_I\star M_J=M_i(M_{I'}\star M_J)+M_j(M_I\star M_{J'})+M_{i+j}(M_{I'}\star M_{J'}).
\end{eqnarray}
\end{enumerate}

Since the bialgebra ${\bf QSym_k}$ is $\N$-graded by the weight (as the dual of the $\N$-graded bialgebra ${\bf Sym_k}$)~:
\begin{eqnarray}
{\bf QSym_k}&=&{\bf k}1_{\bf QSym_k}\oplus\bigoplus_{n\ge1}{\bf QSym_k}_n
\end{eqnarray}
then it is, in fact, the {\it convolution} Hopf algebra. Indeed, one can check that, for any $K,I,J\in(\N_+)^*$,
\begin{eqnarray}
\scal{\Delta_{\star}S^K}{M_I\otimes M_J}_{\mathrm{ext}}
=\scal{S^K}{M_I\star M_J}_{\mathrm{ext}}&\mbox{and}&
\scal{\Delta_{\bullet}M_K}{S^I\otimes S^J}_{\mathrm{ext}}=\scal{M_K}{S^IS^J}_{\mathrm{ext}}.
\end{eqnarray}

\section{Noncommutative symmetric, quasi-sym\-me\-tric functions, and monoidal factorization}\label{factorization}

\subsection{Combinatorics on shuffle and stuffle Hopf algebras}\label{shuffles}

Let $Y=\{y_i\}_{i\ge1}$ be a totally ordered alphabet\footnote{by $y_1>y_2>y_3>\ldots$.}. The free monoid and the set of Lyndon words, over $Y$, are denoted respectively by $Y^*$ and $\Lyn Y$ \cite{berstel_perrin,lothaire,lyndon}.
The neutral element of $Y^*$ is denoted by $1_{Y^*}$.

Let $u=y_{i_1}\ldots y_{i_k}\in Y^*$, the {\it length} and the  {\it weight} of $u$ are defined respectively as the numbers $l(u)=k$ and $w(u)=i_1+\ldots+ i_k$.

Let us define the commutative  product over ${\bf k}Y$, denoted by $\mu$, as follows \cite{BDHMT}
\begin{eqnarray}
\forall y_n,y_m\in Y,&&\mu(y_n, y_m)=y_{n+m},
\end{eqnarray}
or by its associated coproduct, $\Delta_+$, defined by
\begin{eqnarray}
\forall y_n\in Y,&&\Delta_+y_n=\sum_{i=1}^{n-1}y_i\otimes y_{n-i}
\end{eqnarray}
satisfying,
\begin{eqnarray}
\forall x,y,z\in Y,\quad\scal{\Delta_+x}{y\otimes z}&=&\scal{x}{\mu(y,z)}.
\end{eqnarray}

Let ${\bf k}\langle Y\rangle$ be equipped by
\begin{enumerate}
\item The concatenation (or by its associated coproduct, $\Delta_{\bullet}$).
\item The {\it shuffle} product, {\it i.e.} the commutative product defined by \cite{reutenauer}
\begin{eqnarray}
\forall w\in Y^*,&&w\shuffle 1_{Y^*}=1_{Y^*}\shuffle w=w,\\
\forall x,y\in Y,\forall u,v\in Y^*,&&xu\shuffle yv=x(u\shuffle yv)+y(xu\shuffle v)
\end{eqnarray}
or by its associated coproduct, $\Delta_{\minishuffle}$,  defined, on the letters, by
\begin{eqnarray}
\forall y_k\in Y,\quad\Delta_{\minishuffle}y_k&=&y_k\otimes1+1\otimes y_k
\end{eqnarray}
and extended by morphism. It satisfies
\begin{eqnarray}
\forall u,v,w\in Y^*,\quad\scal{\Delta_{\minishuffle}w}{u\otimes v}&=&\scal{w}{u\shuffle v}.
\end{eqnarray}

\item The {\it quasi-shuffle} product, {\it i.e.} the commutative product defined  by \cite{hoffman}, for any $w\in Y^*$,
\begin{eqnarray}
\ w\stuffle 1_{Y^*}=1_{Y^*}\stuffle w=w,
\end{eqnarray}
and, for any $y_i,y_j\in Y,\forall u,v\in Y^*$,
\begin{eqnarray}
y_iu\stuffle y_jv
&=&y_j(y_iu\stuffle v)+y_i(u\stuffle y_jv)+\mu(y_ i,y_j)(u\stuffle v),\\
&=&y_j(y_iu\stuffle v)+y_i(u\stuffle y_jv)+y_{i+j}(u\stuffle v)\end{eqnarray}
or by its associated coproduct, $\Delta_{\ministuffle}$,  defined, on the letters, by
\begin{eqnarray}
\forall y_k\in Y,\quad\Delta_{\ministuffle}y_k&=&\Delta_{\minishuffle}y_k+\Delta_+y_k
\end{eqnarray}
and extended by morphism. It satisfies
\begin{eqnarray}
\forall u,v,w\in Y^*,\quad\scal{\Delta_{\ministuffle}w}{u\otimes v}&=&\scal{w}{u\stuffle v}.
\end{eqnarray}
Note that $\Delta_{\ministuffle}$ and $\Delta_{\minishuffle}$ are morphisms for the concatenation (by definition) whereas $\Delta_+$ is not a morphism for the product of ${\bf k}Y$ (for example $\Delta_+(y_1^2)=y_1\otimes y_1$, whereas $\Delta_+(y_1)^2=0$).
\end{enumerate}
Hence, with the counit $\tt e$ defined by
\begin{eqnarray}\label{counity}
\forall P\in{\bf k}\langle Y\rangle,&&{\tt e}(P)=\scal{P}{1_{Y^*}},
\end{eqnarray}
one gets two pairs of mutually dual bialgebras
\begin{eqnarray}
\calH_{\minishuffle}=({\bf k}\langle Y\rangle,\bullet,1,\Delta_{\shuffle},{\tt e})&\mbox{and}&
\calH_{\minishuffle}^{\vee}=({\bf k}\langle Y\rangle,\shuffle,1,\Delta_{\bullet},{\tt e}),\\
\calH_{\ministuffle}=({\bf k}\langle Y\rangle,\bullet,1,\Delta_{\stuffle},{\tt e})&\mbox{and}&
\calH_{\ministuffle}^{\vee}=({\bf k}\langle Y\rangle,\stuffle,1,\Delta_{\bullet},{\tt e}).
\end{eqnarray}
Let us then consider the following diagonal series\footnote{We use two notations for the same combinatorial object in order to stress the fact that the treatment will be slightly different.}
\begin{eqnarray}
\calD_{\minishuffle}\ =\ \sum_{w\in Y^*}w\otimes w&\mbox{and}&\calD_{\ministuffle}\ =\ \sum_{w\in Y^*}w\otimes w.
\end{eqnarray}
Here, in $\calD_{\minishuffle}$ and $\calD_{\ministuffle}$,  the operation on the right factor of the tensor product is the concatenation, and the operation on the left factor is the shuffle and the quasi-shuffle, respectively.

By the Cartier-Quillen-Milnor-Moore theorem (see \cite{BDHMT}), the connected $\N$-graded, co-commutative Hopf algebra $\calH_{\minishuffle}$ is isomorphic to the enveloping algebra of the Lie algebra of its  primitive elements which is equal to
${\calL ie}_{\bf k}\langle Y\rangle$~:
\begin{eqnarray}
\calH_{\minishuffle}\cong\calU({\calL ie}_{\bf k}\langle Y\rangle)
&\mbox{and}&
\calH_{\minishuffle}^{\vee}\cong\calU({\calL ie}_{\bf k}\langle Y\rangle)^{\vee}.
\end{eqnarray}
Hence, let us consider
\begin{enumerate}
\item the PBW-Lyndon basis $\{p_w\}_{w\in Y^*}$ for ${\cal U}({\calL ie}_{\bf k}\langle Y\rangle)$
constructed recursively as follows \cite{lyndon}
$$\left\{\begin{array}{llll}
p_y&=&y& \mbox{for }y\in Y,\\
p_{l}&=&[p_s,p_r]&\mbox{for }l\in\Lyn Y,\mbox{ standard factorization of }l=(s,r),\\
p_{w}&=&p_{l_1}^{i_1}\ldots p_{l_k}^{i_k}&\mbox{for }w=l_1^{i_1}\ldots l_k^{i_k},l_1>\ldots>l_k,l_1\ldots,l_k\in\Lyn Y,
\end{array}\right.$$
\item and, by duality\footnote{The dual family (i.e. the set of coordinate forms) of a basis lies in the algebraic dual which is here the space of noncommutative series, but as the enveloping algebra under consideration is graded in finite dimensions (here by the multidegree), these series are in fact (multihomogeneous) polynomials.}, the linear basis
$\{s_w\}_{w\in Y^*}$ for $({\bf k}\langle Y\rangle,\shuffle,1_{Y^*})$, {\it i.e.}
\begin{eqnarray}
\forall u,v\in Y^*,&&\scal{p_u}{s_v}=\delta_{u,v}.
\end{eqnarray}
It can be shown that this basis can be computed recursively as follows \cite{reutenauer}
$$\left\{\begin{array}{llll}
s_y&=&y,&\mbox{for }y\in Y,\\
s_l&=&ys_u,&\mbox{for }l=yu\in\Lyn Y,\\
s_w&=&\Frac{1}{i_1!\ldots i_k!}s_{l_1}^{\shuffle i_1}\shuffle\ldots\shuffle s_{l_k}^{\shuffle i_k}
&\mbox{for }w=l_1^{i_1}\ldots l_k^{i_k},l_1>\ldots>l_k.
\end{array}\right.$$
\end{enumerate}
Hence, we get Sch\"utzenberger's factorization of $\calD_{\minishuffle}$
$$\begin{array}{lllll}\label{factorization1}
\calD_{\minishuffle}&=&\Prod_{l\in\Lyn Y}^{\searrow}\exp(s_l\otimes p_l)&\in& \calH_{\minishuffle}^{\vee}\hat\otimes\calH_{\minishuffle}.
\end{array}$$

Similarly, by the Cartier-Quillen-Milnor-Moore theorem (see \cite{BDHMT}), the connected $\N$-graded, co-commutative Hopf algebra $\calH_{\ministuffle}$ is isomorphic to the enveloping algebra of  its  primitive elements~:
\begin{eqnarray}
\mathrm{Prim}(\calH_{\ministuffle})=\mathrm{Im}(\pi_1)=\mathrm{span}_{\bf k}\{\pi_1(w)\vert{w\in Y^*}\},
\end{eqnarray}
where, for any $w\in Y^*,\pi_1(w)$ is obtained as follows \cite{acta}
\begin{eqnarray}\label{pi1}
\pi_1(w)&=&w+\sum_{k\ge2}\frac{(-1)^{k-1}}k\sum_{u_1,\ldots,u_k\in
Y^+}\scal{ w}{u_1\stuffle\ldots\stuffle u_k}\;u_1\ldots u_k.
\end{eqnarray}
note that the eq. \ref{pi1} is equivalent to the following identity which will be used later on
\begin{eqnarray}\label{exp1}
w&=&\sum_{k\ge0}\frac1{k!}\sum_{u_1,\ldots,u_k\in Y^*}
\scal{ w}{u_1\stuffle\ldots\stuffle u_k}\;\pi_1(u_1)\ldots\pi_1(u_k).
\end{eqnarray}
In particular, for any $y_k\in Y$, the primitive polynomial $\pi_1(y_k)$ is given by
\begin{eqnarray}\label{pi1bis}
\pi_1(y_k)&=&y_k+\sum_{l\ge2}\frac{(-1)^{l-1}}{l}\sum_{j_1,\ldots,j_l\ge1\atop j_1+\ldots+j_l=k}y_{j_1}\ldots y_{j_l},
\end{eqnarray}
As previously, (\ref{pi1bis}) is equivalent to
\begin{eqnarray}
y_n&=&\sum_{k\ge1}\frac{1}{k!}\sum_{s'_1+\cdots +s'_k=n}\pi_1(y_{s'_1})\ldots\pi_1(y_{s'_k})\label{pi1ter}.
\end{eqnarray}
Hence, by introducing the new alphabet $\bar Y=\{\bar y\}_{y\in Y}=\{\pi_1(y)\}_{y\in Y}$, one has
\begin{eqnarray}
\calH_{\ministuffle}\cong\calU({\calL ie}_{\bf k}\pol{\bar Y})\cong\calU(\mathrm{Prim}(\calH_{\ministuffle}))
&\mbox{and}&
\calH_{\ministuffle}^{\vee}\cong\calU({\calL ie}_{\bf k}\pol{\bar Y})^{\vee}\cong\calU(\mathrm{Prim}(\calH_{\ministuffle}))^{\vee}.
\end{eqnarray}
By considering
\begin{enumerate}
\item the PBW-Lyndon basis $\{\Pi_w\}_{w\in Y^*}$ for $\calU(\mathrm{Prim}(\calH_{\ministuffle}))$ constructed recursively as follows \cite{acta}
$$\left\{\begin{array}{llll}
\Pi_y&=&\pi_1(y)& \mbox{for }y\in Y,\\
\Pi_{l}&=&[\Pi_s,\Pi_r]&\mbox{for }l\in\Lyn Y,\mbox{ standard factorization of }l=(s,r),\\
\Pi_{w}&=&\Pi_{l_1}^{i_1}\ldots\Pi_{l_k}^{i_k}&\mbox{for }w=l_1^{i_1}\ldots l_k^{i_k},l_1>\ldots>l_k,l_1\ldots,l_k\in\Lyn Y,
\end{array}\right.$$
\item and, by duality\footnote{{\it idem}.}, the linear basis $\{\Sigma_w\}_{w\in Y^*}$ for $({\bf k}\langle Y\rangle,\stuffle,1_{Y^*})$, {\it i.e.}
\begin{eqnarray}
\forall u,v\in Y^*,&&\scal{\Pi_u}{\Sigma_v}=\delta_{u,v}.
\end{eqnarray}
It can be shown that this basis can be computed recursively as follows \cite{BDM,acta}
$$\left\{\begin{array}{llll}
\Sigma_y&=&y&\mbox{for }y\in Y,\\
\Sigma_l&=&
\Sum_{{\{s'_1,\cdots,s'_i\}\subset \{s_1,\cdots,s_k\}, l_1\ge\cdots\ge l_n\in\Lyn Y}\atop (y_{s_1}\cdots y_{s_k})
\stackrel{*}{\Leftarrow} (y_{s'_1},\cdots,y_{s'_n},l_1,\cdots,l_n)}{\frac1{i!}y_{s'_1+\cdots+s'_i}\Sigma_{l_1\cdots l_n}}
&\mbox{for }l=y_{s_1}\cdots y_{s_k}\in\Lyn Y,\\
\Sigma_w&=&
\Frac{1}{i_1!\ldots i_k!}\Sigma_{l_1}^{\shuffle i_1}\stuffle\ldots\shuffle\Sigma_{l_k}^{\stuffle i_k}
&\mbox{for }w=l_1^{i_1}\ldots l_k^{i_k},l_1>\ldots>l_k,
\end{array}\right.$$
\end{enumerate}
we get the following extended Sch\"utzenberger's factorization of $\calD_{\ministuffle}$ \cite{BDM,acta}
\begin{eqnarray}\label{factorization2}
\calD_{\ministuffle}\ =\ \prod_{l\in\Lyn Y}^{\searrow}\exp(\Sigma_l\otimes\Pi_l)\ \in\ \calH_{\ministuffle}^{\vee}\hat\otimes\calH_{\ministuffle}.
\end{eqnarray}

\subsection{Encoding noncommutative symmetric and quasi-sym\-metric functions by words}\label{combinatoire}

\begin{proposition}\label{Y}
Let ${\cal Y}(t)$ be the following ordinary generating series of $\{y_n\}_{n\ge1}$~:
$$\begin{array}{rllcr}
{\cal Y}(t)&=&1+\Sum_{n\ge1}y_n\;t^n&\in&\Q\pol{Y}[\![t]\!].
\end{array}$$
Then ${\cal Y}(t)$ is group-like, for the coproduct $\Delta_{\ministuffle}$.
\end{proposition}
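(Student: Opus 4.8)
The plan is to establish the two defining conditions of group-likeness directly: the comultiplicative identity $\Delta_{\ministuffle}\,\mathcal{Y}(t)=\mathcal{Y}(t)\otimes\mathcal{Y}(t)$ and the counit normalization ${\tt e}(\mathcal{Y}(t))=1$. Since $t$ is a central variable commuting with all the $y_n$, I would first extend $\Delta_{\ministuffle}$ from $\pol{Y}$ to $\Q\pol{Y}[\![t]\!]$ by $\Q[\![t]\!]$-linearity, so that the whole verification reduces to understanding $\Delta_{\ministuffle}$ on each letter $y_n$ and then reassembling the generating series coefficient by coefficient in $t$.

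The key computation is the action on a single letter. From the definition $\Delta_{\ministuffle}y_n=\Delta_{\minishuffle}y_n+\Delta_+y_n$ one reads off $\Delta_{\minishuffle}y_n=y_n\otimes1+1\otimes y_n$ together with $\Delta_+y_n=\sum_{i=1}^{n-1}y_i\otimes y_{n-i}$. Adopting the convention $y_0:=1_{Y^*}$, these families merge into the single identity $\Delta_{\ministuffle}y_n=\sum_{i=0}^{n}y_i\otimes y_{n-i}$, in which the two endpoints $i=0$ and $i=n$ are contributed by the shuffle part $\Delta_{\minishuffle}$, while the interior range $1\le i\le n-1$ is contributed precisely by $\Delta_+$. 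This is the crux of the argument.

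Summing against $t^n$ and using $\Q[\![t]\!]$-linearity then gives $\Delta_{\ministuffle}\mathcal{Y}(t)=\sum_{n\ge0}t^n\sum_{i=0}^{n}y_i\otimes y_{n-i}$, whereas the Cauchy product of $\mathcal{Y}(t)=\sum_{i\ge0}y_i\,t^i$ with itself yields $\mathcal{Y}(t)\otimes\mathcal{Y}(t)=\sum_{n\ge0}t^n\sum_{i+j=n}y_i\otimes y_j$. Comparing the coefficients of $t^n$ shows the two expressions agree termwise, which is the desired identity. The counit condition is immediate, since the coefficient of the empty word in $\mathcal{Y}(t)$ equals $1$, i.e. ${\tt e}(\mathcal{Y}(t))=\scal{\mathcal{Y}(t)}{1_{Y^*}}=1$.

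There is no genuine obstacle in this proposition; the only point requiring care is the bookkeeping behind the convention $y_0=1_{Y^*}$, which is exactly what turns the decomposition ``shuffle endpoints plus $\Delta_+$ interior'' into an honest discrete convolution. It is worth stressing the structural parallel with the background material: the formula $\Delta_{\ministuffle}y_n=\sum_{i=0}^{n}y_i\otimes y_{n-i}$ is the verbatim analogue of $\Delta_{\star}S_n=\sum_{i=0}^{n}S_i\otimes S_{n-i}$, so this proposition is the word-level counterpart of the group-likeness of $\sigma(t)$, and it is precisely the identification $y_n\leftrightarrow S_n$ underlying the forthcoming isomorphism with the \emph{concatenation} Hopf algebra.
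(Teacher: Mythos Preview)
Your proof is correct and follows essentially the same route as the paper: both adopt the convention $y_0=1_{Y^*}$, rewrite $\Delta_{\ministuffle}y_n$ as the convolution $\sum_{i+j=n}y_i\otimes y_j$, and then identify the result with the Cauchy product $\mathcal{Y}(t)\hat\otimes\mathcal{Y}(t)$, together with the immediate counit check ${\tt e}(\mathcal{Y}(t))=1$. Your write-up is simply more explicit about how the endpoints of the sum come from $\Delta_{\minishuffle}$ and the interior from $\Delta_+$, which the paper leaves implicit.
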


\begin{proof}
We have successively ((here, in order to make complete the correspondence ${\cal S}$, we put $y_0=1$)
\begin{eqnarray*}
\Delta_{\ministuffle}{\cal Y}(t)
=\sum_{n\ge0}\biggl(\sum_{r+s=n}y_s\otimes y_r\biggr)t^n=\sum_{n\ge0}\sum_{r+s=n}(y_s\;t^s)\otimes(y_r\;t^r).
\end{eqnarray*}
It follows then $\Delta_{\ministuffle}{\cal Y}(t)={\cal Y}(t)\hat\otimes{\cal Y}(t)$ meaning (with ${\tt e}({\cal Y}(t))=1$) that ${\cal Y}(t)$ is group-like.
\end{proof}

\begin{proposition}\label{logY}
Let $\mathcal{G}$  be the Lie algebra generated by $\{\scal{\log{\cal Y}}{t^n}\}_{n\ge1}$.
Then we have $\mathcal{G}=\mathrm{Prim}(\calH_{\ministuffle})$.
\end{proposition}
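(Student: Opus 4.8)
The plan is to compute the coefficients $\scal{\log{\cal Y}}{t^n}$ explicitly and to recognize each of them as the primitive polynomial $\pi_1(y_n)$ of \pref{pi1bis}. Granting this identification, the result is immediate from the structural description recalled above: $\mathrm{Prim}(\calH_{\ministuffle})\cong{\calL ie}_{\bf k}\pol{\bar Y}$, where $\bar Y=\{\pi_1(y_n)\}_{n\ge1}$, and in this identification the primitives are \emph{freely} generated as a Lie algebra by $\bar Y$.

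First I would expand the logarithm of ${\cal Y}(t)=1+\sum_{n\ge1}y_n t^n$ as a formal series in $t$. Since ${\cal Y}(t)-1$ has vanishing constant term in $t$, the expression
$$\log{\cal Y}(t)=\sum_{k\ge1}\frac{(-1)^{k-1}}{k}\Bigl(\sum_{n\ge1}y_n t^n\Bigr)^k$$
is well defined in $\Q\pol{Y}[\![t]\!]$, and collecting the coefficient of $t^n$ yields
$$\scal{\log{\cal Y}}{t^n}=\sum_{k\ge1}\frac{(-1)^{k-1}}{k}\sum_{j_1+\cdots+j_k=n\atop j_1,\ldots,j_k\ge1}y_{j_1}\cdots y_{j_k}.$$
The term $k=1$ contributes $y_n$, and the remaining sum over $k\ge2$ coincides, coefficient by coefficient, with the tail of \pref{pi1bis}. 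Hence $\scal{\log{\cal Y}}{t^n}=\pi_1(y_n)=\bar y_n$ for every $n\ge1$.

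To conclude, I observe that $\mathcal{G}$, being by definition the Lie algebra generated by $\{\scal{\log{\cal Y}}{t^n}\}_{n\ge1}$, is exactly the Lie algebra generated by $\bar Y=\{\pi_1(y_n)\}_{n\ge1}$, that is ${\calL ie}_{\bf k}\pol{\bar Y}$. Invoking the Cartier--Quillen--Milnor--Moore identification quoted above, ${\calL ie}_{\bf k}\pol{\bar Y}=\mathrm{Prim}(\calH_{\ministuffle})$, whence $\mathcal{G}=\mathrm{Prim}(\calH_{\ministuffle})$. For the inclusion $\mathcal{G}\subseteq\mathrm{Prim}(\calH_{\ministuffle})$ one could alternatively appeal directly to Proposition \ref{Y}: since ${\cal Y}(t)$ is group-like and $\Delta_{\ministuffle}$ is a continuous concatenation morphism, one has $\Delta_{\ministuffle}\log{\cal Y}(t)=\log({\cal Y}(t)\hat\otimes{\cal Y}(t))$, and because the commuting factors ${\cal Y}(t)\hat\otimes 1$ and $1\hat\otimes{\cal Y}(t)$ give $\log({\cal Y}(t)\hat\otimes{\cal Y}(t))=(\log{\cal Y}(t))\hat\otimes 1+1\hat\otimes(\log{\cal Y}(t))$, the series $\log{\cal Y}(t)$ is primitive and so are all its coefficients.

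I expect the only genuinely delicate point to be not computational but structural: one must be entitled to assert that $\mathrm{Prim}(\calH_{\ministuffle})$ is \emph{freely} generated by $\bar Y$, so that the Lie algebra generated by the $\pi_1(y_n)$ fills out all of $\mathrm{Prim}(\calH_{\ministuffle})$ and not merely a proper subalgebra. This is exactly what the earlier isomorphism $\calH_{\ministuffle}\cong\calU({\calL ie}_{\bf k}\pol{\bar Y})\cong\calU(\mathrm{Prim}(\calH_{\ministuffle}))$ secures; the matching of signs and of the weights $1/k$ between the log expansion and \pref{pi1bis} is then routine bookkeeping.
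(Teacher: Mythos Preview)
Your proposal is correct and follows essentially the same approach as the paper: expand $\log{\cal Y}(t)$ formally, identify the coefficient of $t^n$ with $\pi_1(y_n)$ via \pref{pi1bis}, and then invoke the fact (cited from \cite{acta}) that $\{\pi_1(y_n)\}_{n\ge1}$ freely generates $\mathrm{Prim}(\calH_{\ministuffle})$. Your additional remark deriving primitivity of $\log{\cal Y}(t)$ directly from the group-like property of ${\cal Y}(t)$ is a nice complement but not a different strategy.
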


\begin{proof}
The power series $\log{\cal Y}\in\Q\pol{Y}[\![t]\!]$ is primitive then by expanding $\log{\cal Y}$, we get successively
\begin{eqnarray*}
\log{\cal Y}(t)
=\sum_{k\ge1}\frac{(-1)^{k-1}}k\biggl(\sum_{n\ge1}y_n\;t^n\biggr)^k
=\sum_{k\ge1}\frac{(-1)^{k-1}}k\biggl(\sum_{s_1,\ldots,s_n\ge1\atop s_1+\ldots+s_n=k}
y_{s_1}\ldots y_{s_n}\biggr)t^k.
\end{eqnarray*}
By (\ref{pi1bis}), we get, for any $n\ge1$, $\scal{\log{\cal Y}}{t^n}=\pi_1(y_n)$ and since $\{\pi_1(y_n)\}_{n\ge1}$ generates freely $\mathrm{Prim}(\calH_{\ministuffle})$ \cite{acta}, the expected result follows.
\end{proof}

In virtue of (\ref{pi1bis}) and (\ref{pi1ter}), we also have

\begin{corollary}
\begin{eqnarray*}
{\cal Y}(t)
&=&1+\Sum_{n\ge1}\biggl(\sum_{k\ge1}\frac1{k!}\sum_{s_1+\cdots+s_k=n}\pi_1(y_{s_1})\ldots\pi_1(y_{s_k})\biggr)t^n\\
{\cal Y}(t)^{-1}
&=&1+\Sum_{n\ge1}(-1)^n\biggl(\sum_{k\ge1}\frac1{k!}\sum_{s_1+\cdots+s_k=n}\pi_1(y_{s_1})\ldots\pi_1(y_{s_k})\biggr)t^n.
\end{eqnarray*}
\end{corollary}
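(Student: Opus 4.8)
The plan is to derive both identities from a single structural fact, already proved in Proposition~\ref{logY}: the logarithm of ${\cal Y}(t)$ is the primitive series $\log{\cal Y}(t)=\sum_{n\ge1}\pi_1(y_n)\,t^n$. Since ${\cal Y}(t)$ is group-like (Proposition~\ref{Y}) and has constant term $1$, passing to $\log$ is a legitimate operation in $\Q\pol{Y}[\![t]\!]$, and inverting it gives the exponential form
\begin{eqnarray*}
{\cal Y}(t)&=&\exp\Bigl(\sum_{n\ge1}\pi_1(y_n)\,t^n\Bigr).
\end{eqnarray*}
From here everything reduces to expanding this exponential, and its inverse, degree by degree in $t$.

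First I would treat the formula for ${\cal Y}(t)$. Writing $\exp$ as $\sum_{k\ge0}\frac1{k!}\bigl(\sum_{n\ge1}\pi_1(y_n)t^n\bigr)^k$ and extracting the coefficient of $t^n$ collects the ordered noncommuting products $\pi_1(y_{s_1})\ldots\pi_1(y_{s_k})$ over all decompositions $s_1+\cdots+s_k=n$, each weighted by $\frac1{k!}$; this is precisely the asserted right-hand side. Equivalently, and even more directly, one may substitute identity~(\ref{pi1ter}) for each $y_n$ into the defining series ${\cal Y}(t)=1+\sum_{n\ge1}y_n t^n$ and read off the coefficient of $t^n$. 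Either route is a routine rearrangement once the exponential form is available.

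For ${\cal Y}(t)^{-1}$ I would simply negate the logarithm: since $\log{\cal Y}(t)$ has zero constant term, $\exp$ and $\log$ are mutually inverse, so
\begin{eqnarray*}
{\cal Y}(t)^{-1}&=&\exp\bigl(-\log{\cal Y}(t)\bigr)=\sum_{k\ge0}\frac{(-1)^k}{k!}\Bigl(\sum_{n\ge1}\pi_1(y_n)\,t^n\Bigr)^k.
\end{eqnarray*}
Extracting the coefficient of $t^n$ again yields a $\frac1{k!}$-weighted sum of the ordered products $\pi_1(y_{s_1})\ldots\pi_1(y_{s_k})$ over $s_1+\cdots+s_k=n$, now carrying the sign coming from the negated exponent; this gives the second identity. (One should double-check the sign bookkeeping when collecting like powers of $t$, since the alternation is governed by the number $k$ of factors entering each term.)

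I do not expect a genuine obstacle here: the difficulty is bookkeeping rather than conceptual. The points needing care are purely formal — one must check that taking $\log$, exponentiating, and inverting are all valid $t$-adically in $\Q\pol{Y}[\![t]\!]$, which holds because $\sum_{n\ge1}\pi_1(y_n)t^n$ has vanishing constant term, and that the order of the noncommuting factors $\pi_1(y_{s_i})$ is respected throughout. The only nontrivial ingredient, namely $\log{\cal Y}(t)=\sum_{n\ge1}\pi_1(y_n)t^n$, is supplied by Proposition~\ref{logY}, so the Corollary is essentially a direct reading of the exponential together with its inverse.
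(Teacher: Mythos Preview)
Your approach coincides with the paper's: the corollary is stated there simply ``in virtue of (\ref{pi1bis}) and (\ref{pi1ter})'', i.e.\ by substituting (\ref{pi1ter}) into the defining series of ${\cal Y}(t)$ --- exactly your exponential expansion of $\exp\bigl(\sum_{n\ge1}\pi_1(y_n)t^n\bigr)$ via Proposition~\ref{logY}.

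One remark on your parenthetical caution: it is well-founded. Expanding $\exp(-\log{\cal Y}(t))$ as you do yields, for the coefficient of $t^n$, the sum $\sum_{k\ge1}\frac{(-1)^k}{k!}\sum_{s_1+\cdots+s_k=n}\pi_1(y_{s_1})\ldots\pi_1(y_{s_k})$, with the sign governed by the number $k$ of factors, not by $n$. The printed formula places the sign as $(-1)^n$ outside the $k$-sum, which would instead give ${\cal Y}(-t)$; since ${\cal Y}(-t)\neq{\cal Y}(t)^{-1}$ in general (already for ${\cal Y}(t)=1+y_1t$), the second displayed identity carries a typo, and your derivation produces the correct version with $(-1)^k$ inside.
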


\begin{corollary}\label{Yinverse}
Let us write the (group-like) power series ${\cal Y}^{-1}$ and its differentiation as follows 
$$\begin{array}{rllllllll}
{\cal Y}(t)^{-1}&=&1+\Sum_{n\ge1}X_n\;t^n&\in&\Q\pol{Y}[\![t]\!]
&\mbox{and}&
\dot{\cal Y}^{-1}&=&-{\cal Y}^{-1}\dot{\cal Y}{\cal Y}^{-1}.
\end{array}$$
Then, for any $n\ge1$, one has
$$\begin{array}{rlllllll}
\Sum_{i=1}^ny_iX_{n-i}&=&0&\mbox{and}&\Sum_{i=1}^nX_iy_{n-i}&=&0.
\end{array}$$
\end{corollary}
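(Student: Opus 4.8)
The plan is to read both relations off the defining identity of the inverse series by extracting the coefficient of $t^n$, the two formulas corresponding to the two orders of the noncommutative concatenation product. First I would observe that ${\cal Y}(t)$ has constant term $1$ and is therefore invertible in the concatenation algebra $\Q\pol{Y}[\![t]\!]$; in fact, being group-like for $\Delta_{\ministuffle}$ by Proposition \ref{Y}, its inverse is its antipode, so that the two-sided identity
\begin{eqnarray*}
{\cal Y}(t)\,{\cal Y}(t)^{-1}={\cal Y}(t)^{-1}\,{\cal Y}(t)=1
\end{eqnarray*}
holds. Keeping the normalization $y_0=X_0=1$ already used in the proof of Proposition \ref{Y}, I would write ${\cal Y}(t)=\sum_{n\ge0}y_n\,t^n$ and ${\cal Y}(t)^{-1}=\sum_{n\ge0}X_n\,t^n$.

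Next I would form the two Cauchy products. For $n\ge1$ the coefficient of $t^n$ in ${\cal Y}(t)\,{\cal Y}(t)^{-1}$ is $\sum_{i+j=n}y_i X_j$, and it must vanish since the right-hand side is the constant series $1$; this is the first relation. Carrying out the identical computation with the factors in the opposite order, the coefficient of $t^n$ in ${\cal Y}(t)^{-1}\,{\cal Y}(t)$ is $\sum_{i+j=n}X_i y_j$, which likewise vanishes for $n\ge1$ and gives the second relation. Because concatenation is noncommutative the two orderings are genuinely distinct, so both computations are needed: one places the letters $y_i$ on the left of each product, the other places the $X_i$ on the left.

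Finally, I would note that the displayed relation $\dot{\cal Y}^{-1}=-{\cal Y}^{-1}\dot{\cal Y}\,{\cal Y}^{-1}$ is nothing but the $t$-derivative of ${\cal Y}\,{\cal Y}^{-1}=1$ (equivalently of ${\cal Y}^{-1}\,{\cal Y}=1$), so it is automatically consistent with the two identities and serves as a cross-check rather than being needed for their proof. There is no substantial obstacle in this argument; the only points demanding care are respecting the noncommutativity, so that in each Cauchy product the left factor is matched with the correct relation, and applying the normalization $y_0=X_0=1$ consistently when separating the extreme terms of each sum.
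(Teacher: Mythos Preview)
Your argument is correct and coincides with the paper's own proof: both vanishing relations are obtained by identifying the coefficient of $t^n$ in ${\cal Y}{\cal Y}^{-1}={\cal Y}^{-1}{\cal Y}=1_{Y^*}$, and the formula for $\dot{\cal Y}^{-1}$ by differentiating that same identity. Your remarks on noncommutativity and the normalization $y_0=X_0=1$ are exactly the points of care the paper leaves implicit.
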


\begin{proof}
Using the identities ${\cal Y}{\cal Y}^{-1}={\cal Y}^{-1}{\cal Y}=1_{Y^*}$, the results follow immediately by identification of the coefficients of $t^n$ and by differentiation, respectively.
\end{proof}

\begin{corollary}\label{LR}
There exists two (unique and primitive) generating series $L$ and $R\in\Q\pol{Y}[\![t]\!]$ satisfying $\dot{\cal Y}=L{\cal Y}={\cal Y}R$.
Moreover, if
$$\begin{array}{rlllllll}
L(t)&=&\Sum_{n\ge1}L_n\;t^{n-1}&\mbox{and}&R(t)&=&\Sum_{n\ge1}R_n\;t^{n-1}
\end{array}$$
Then, for any $n\ge1$,
$$\begin{array}{ccccccl}
ny_n&=&\Sum_{i=0}^{n-1}L_iy_{n-1-i}&\mbox{and}&ny_n&=&\Sum_{i=0}^{n-1}y_iR_{n-1-i},\\
L_n&=&\Sum_{i=0}^{n-1}(i+1)y_{i+1}X_{n-1-i}&\mbox{and}&R_n&=&\Sum_{i=0}^{n-1}(i+1)X_{n-1-i}y_{i+1}.
\end{array}$$
\end{corollary}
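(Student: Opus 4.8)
The plan is to produce $L$ and $R$ explicitly from the invertibility of $\cal Y$, then deduce their primitivity from the fact that $\cal Y$ is group-like, and finally obtain the four recurrences by extracting the coefficient of $t^{n-1}$ in the relevant products. For existence and uniqueness, note that $\cal Y(t)=1+\sum_{n\ge1}y_n t^n$ has unit constant term, hence is invertible in $\Q\pol{Y}[\![t]\!]$ with inverse ${\cal Y}^{-1}=1+\sum_{n\ge1}X_nt^n$ as recorded in Corollary \ref{Yinverse}. I would then simply set $L:=\dot{\cal Y}{\cal Y}^{-1}$ and $R:={\cal Y}^{-1}\dot{\cal Y}$, so that $L{\cal Y}=\dot{\cal Y}={\cal Y}R$ by associativity. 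Any other $L'$ with $L'{\cal Y}=\dot{\cal Y}$ must coincide with $\dot{\cal Y}{\cal Y}^{-1}=L$ after right multiplication by ${\cal Y}^{-1}$, and symmetrically $R$ is forced by left multiplication; this settles uniqueness.

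For primitivity I would use Proposition \ref{Y}: since $\cal Y$ is group-like, $\Delta_{\ministuffle}{\cal Y}={\cal Y}\hat\otimes{\cal Y}$, and because $\Delta_{\ministuffle}$ is an algebra morphism for the concatenation, applying it to ${\cal Y}{\cal Y}^{-1}=1$ gives $\Delta_{\ministuffle}{\cal Y}^{-1}={\cal Y}^{-1}\hat\otimes{\cal Y}^{-1}$. As $\Delta_{\ministuffle}$ acts coefficientwise in $t$ it commutes with $d/dt$, so by the Leibniz rule $\Delta_{\ministuffle}\dot{\cal Y}=\dot{\cal Y}\hat\otimes{\cal Y}+{\cal Y}\hat\otimes\dot{\cal Y}$. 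Multiplying this on the right by $\Delta_{\ministuffle}{\cal Y}^{-1}$ and cancelling ${\cal Y}{\cal Y}^{-1}=1$ yields $\Delta_{\ministuffle}L=L\hat\otimes1+1\hat\otimes L$, and the mirror computation with ${\cal Y}^{-1}$ on the left gives $\Delta_{\ministuffle}R=R\hat\otimes1+1\hat\otimes R$. Thus $L$ and $R$ are primitive, and in particular every coefficient $L_n,R_n$ lies in $\mathrm{Prim}(\calH_{\ministuffle})=\mathcal G$ by Proposition \ref{logY}. This primitivity step is the genuinely substantive point: the only things to justify carefully are that $\Delta_{\ministuffle}$ commutes with the formal differentiation (legitimate since $t$ is central and the coproduct is defined on the $t$-coefficients) and that group-likeness passes to ${\cal Y}^{-1}$; everything else is the Cauchy product.

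For the recurrences I would write $\dot{\cal Y}=\sum_{n\ge1}ny_nt^{n-1}$ and read off the coefficient of $t^{n-1}$ on both sides of $\dot{\cal Y}=L{\cal Y}$ and of $\dot{\cal Y}={\cal Y}R$, using $y_0=1$; the convolution over the two factors gives the first pair of displayed identities expressing $ny_n$ through the $L_i$ (resp. $R_i$) and the $y_j$. For the second pair I would instead start from the closed forms $L=\dot{\cal Y}{\cal Y}^{-1}$ and $R={\cal Y}^{-1}\dot{\cal Y}$, insert ${\cal Y}^{-1}=1+\sum_{n\ge1}X_nt^n$ and $\dot{\cal Y}=\sum_{p\ge0}(p+1)y_{p+1}t^{p}$, and collect the coefficient of $t^{n-1}$; the index condition $p+q=n-1$ with $X_0=1$ then produces $L_n=\sum_{i=0}^{n-1}(i+1)y_{i+1}X_{n-1-i}$ and, symmetrically, the stated formula for $R_n$. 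This last part is pure bookkeeping once the explicit expressions for $L$ and $R$ are in hand.
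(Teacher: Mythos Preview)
Your proposal is correct and follows essentially the same route as the paper: define $L=\dot{\cal Y}{\cal Y}^{-1}$, $R={\cal Y}^{-1}\dot{\cal Y}$, read off the coefficient identities from the four products $\dot{\cal Y}=L{\cal Y}={\cal Y}R$, $L=\dot{\cal Y}{\cal Y}^{-1}$, $R={\cal Y}^{-1}\dot{\cal Y}$, and obtain primitivity from $\Delta_{\ministuffle}\dot{\cal Y}=\dot{\cal Y}\hat\otimes{\cal Y}+{\cal Y}\hat\otimes\dot{\cal Y}$ together with $\Delta_{\ministuffle}{\cal Y}^{-1}={\cal Y}^{-1}\hat\otimes{\cal Y}^{-1}$. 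Your write-up is, if anything, slightly more explicit than the paper's on uniqueness and on why ${\cal Y}^{-1}$ is group-like, but the argument is the same.
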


\begin{proof}
On the one hand, by Proposition \ref{Y}, one has
\begin{eqnarray*}
\frac{d}{dt}{\cal Y}(t)&=&\sum_{n\ge1}ny_n\;t^{n-1}.
\end{eqnarray*}
On the other hand,  such generating series exist since
$$\begin{array}{rlllllll}
&\dot{\cal Y}&=&L{\cal Y}&\mbox{and}&\dot{\cal Y}&=&{\cal Y}R,\\
\iff&L&=&\dot{\cal Y}{\cal Y}^{-1}&\mbox{and}&R&=&{\cal Y}^{-1}\dot{\cal Y}.
\end{array}$$
Hence, identifying the coefficients of $t^n$ in these identities, the expected results follows.

Moreover, since $\Delta_{\ministuffle}$ commutes with $d/dt$ and it is a morphism for the concatenation then
$$\begin{array}{rlllllllll}
\Delta_{\ministuffle}L
&=&(\dot{\cal Y}\hat\otimes{\cal Y}+{\cal Y}\hat\otimes\dot{\cal Y})({\cal Y}^{-1}\hat\otimes{\cal Y}^{-1})
&=&\dot{\cal Y}{\cal Y}^{-1}\hat\otimes{\cal Y}{\cal Y}^{-1}+{\cal Y}{\cal Y}^{-1}\hat\otimes\dot{\cal Y}{\cal Y}^{-1}
&=&\dot{\cal Y}{\cal Y}^{-1}\hat\otimes1_{Y^*}+1_{Y^*}\hat\otimes\dot{\cal Y}{\cal Y}^{-1},\\
\Delta_{\ministuffle}R
&=&({\cal Y}^{-1}\hat\otimes{\cal Y}^{-1})(\dot{\cal Y}\hat\otimes{\cal Y}+{\cal Y}\hat\otimes\dot{\cal Y})
&=&{\cal Y}^{-1}\dot{\cal Y}\hat\otimes{\cal Y}^{-1}{\cal Y}+{\cal Y}^{-1}{\cal Y}\hat\otimes{\cal Y}^{-1}\dot{\cal Y}
&=&{\cal Y}^{-1}\dot{\cal Y}\hat\otimes1_{Y^*}+1_{Y^*}\hat\otimes{\cal Y}^{-1}\dot{\cal Y}.
\end{array}$$
Hence, $\Delta_{\ministuffle}L=1_{Y^*}\hat\otimes L+L\hat\otimes1_{Y^*}$
and $\Delta_{\ministuffle}R=1_{Y^*}\hat\otimes R+R\hat\otimes1_{Y^*}$ meaning that $L$ and $R$ are primitive.
\end{proof}

More generally, with the notations of Corollary \ref{LR}, one has 
\begin{proposition}
For any $k\ge1$, there exist two unique generating series ${\cal L}_k,{\cal R}_k\in\Q\pol{Y}[\![t]\!]$ such that
${\cal Y}^{(k)}={\cal L}_k{\cal Y}={\cal Y}{\cal R}_k$. The families $\{{\cal L}_k\}_{k\ge1}$ and $\{{\cal R}_k\}_{k\ge1}$ are defined  recursively as follows
\begin{eqnarray*}
{\cal L}_1=L&\mbox{and}&{\cal L}_k=\dot{\cal L}_{k-1}+{\cal L}_{k-1}L,\\
{\cal R}_1=R&\mbox{and}&{\cal R}_k=\dot{\cal R}_{k-1}+R{\cal R}_{k-1}.
\end{eqnarray*}
\end{proposition}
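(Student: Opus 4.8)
The plan is to argue by induction on $k$, deriving the two recursions by a direct Leibniz differentiation, and to obtain the uniqueness clause from the invertibility of the group-like series ${\cal Y}$.

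First I would settle uniqueness. Since ${\cal Y}$ is group-like by Proposition \ref{Y}, it is invertible in $\Q\pol{Y}[\![t]\!]$, and its inverse ${\cal Y}^{-1}$ is precisely the series already exhibited in Corollary \ref{Yinverse}. Consequently any series satisfying ${\cal Y}^{(k)}={\cal L}_k{\cal Y}$ is forced to equal ${\cal L}_k={\cal Y}^{(k)}{\cal Y}^{-1}$, and likewise ${\cal R}_k={\cal Y}^{-1}{\cal Y}^{(k)}$ is determined; so each of ${\cal L}_k,{\cal R}_k$ is unique as soon as it exists. This also confirms that both families stay inside $\Q\pol{Y}[\![t]\!]$, since ${\cal Y}^{(k)}$ and ${\cal Y}^{\pm1}$ all lie there.

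Next I would prove existence together with the recursions. The base case $k=1$ is exactly Corollary \ref{LR}, which gives ${\cal L}_1=L$ and ${\cal R}_1=R$. For the inductive step, assume ${\cal Y}^{(k-1)}={\cal L}_{k-1}{\cal Y}$. Differentiating in $t$ by the Leibniz rule gives ${\cal Y}^{(k)}=\dot{\cal L}_{k-1}{\cal Y}+{\cal L}_{k-1}\dot{\cal Y}$; substituting $\dot{\cal Y}=L{\cal Y}$ from Corollary \ref{LR} and factoring ${\cal Y}$ out on the right yields ${\cal Y}^{(k)}=(\dot{\cal L}_{k-1}+{\cal L}_{k-1}L){\cal Y}$, which is the asserted relation with ${\cal L}_k=\dot{\cal L}_{k-1}+{\cal L}_{k-1}L$. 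The argument for ${\cal R}_k$ is symmetric: starting from ${\cal Y}^{(k-1)}={\cal Y}{\cal R}_{k-1}$, differentiating, and substituting $\dot{\cal Y}={\cal Y}R$ gives ${\cal Y}^{(k)}={\cal Y}(\dot{\cal R}_{k-1}+R{\cal R}_{k-1})$, whence ${\cal R}_k=\dot{\cal R}_{k-1}+R{\cal R}_{k-1}$.

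There is essentially no serious obstacle here, as the computation is a one-line differentiation at each stage. The only points that deserve a word of care are that differentiation in $t$ is compatible with the concatenation product (so the Leibniz rule applies coefficientwise) and that the recursively defined ${\cal L}_k,{\cal R}_k$ genuinely remain polynomial-coefficient series; both are immediate since $L,R\in\Q\pol{Y}[\![t]\!]$ and this class is closed under $d/dt$ and concatenation. The invertibility of ${\cal Y}$, already furnished by Corollary \ref{Yinverse}, is exactly what upgrades the existence statement to the uniqueness claimed in the proposition.
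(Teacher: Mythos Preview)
Your proof is correct and follows essentially the same route as the paper: induction on $k$, with the base case supplied by Corollary \ref{LR} and the inductive step obtained by differentiating ${\cal Y}^{(k-1)}={\cal L}_{k-1}{\cal Y}$ (resp. ${\cal Y}{\cal R}_{k-1}$) and substituting $\dot{\cal Y}=L{\cal Y}$ (resp. ${\cal Y}R$). If anything, your version is slightly more complete, since you explicitly derive uniqueness from the invertibility of ${\cal Y}$, whereas the paper leaves that point implicit.
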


\begin{proof}
On the one hand, by Proposition \ref{Y}, one has, as in Corollary \ref{LR},
\begin{eqnarray*}
\frac{d^k}{dt^k}{\cal Y}(t)=\sum_{n\ge k}(n)_ky_n\;t^{n-k},
\end{eqnarray*}
where $(n)_k=n(n-1)\ldots(n-k)$ is the Pochhammer symbol. On the other hand, by induction
\begin{itemize}
\item For $k=1$, it is Corollary \ref{LR}.
\item Suppose the property holds for any $1\le n\le k-1$.
\item For $n=k$, such generating series exist since, by induction hypothesis,
$$\begin{array}{ccccccccl}
{\cal Y}^{(k)}&=&\dot{\cal L}_{k-1}{\cal Y}+{\cal L}_{k-1}\dot{\cal Y}
&=&\dot{\cal L}_{k-1}{\cal Y}+{\cal L}_{k-1}L{\cal Y}
&=&(\dot{\cal L}_{k-1}+{\cal L}_{k-1}L){\cal Y},\\
{\cal Y}^{(k)}&=&\dot{\cal Y}{\cal R}_{k-1}+{\cal Y}\dot{\cal R}_{k-1}
&=&{\cal Y}R{\calR}_{k-1}+{\cal Y}\dot{\cal R}_{k-1}
&=&{\cal Y}(R{\calR}_{k-1}+\dot{\cal R}_{k-1}).
\end{array}$$
Hence, ${\cal L}_k=\dot{\cal L}_{k-1}+{\cal L}_{k-1}L$ and ${\cal R}_k=R{\cal R}_{k-1}+\dot{\cal R}_{k-1}$.
\end{itemize}
\end{proof}

\begin{corollary}
For any proper power series $A,B$, let $\ad^n_{A}B$ be the iterated Lie brackets defined recursively by $\ad^0_{A}B=B$ and $\ad^{n+1}_{A}B=[\ad^n_{A},B]$, for $n\ge1$. Then, with notations of Corollary \ref{LR}, one has
\begin{eqnarray*}
{\cal L}_k=\sum_{n\ge0}\frac{\ad^n_{\log{\cal Y}}R}{n!}
&\mbox{and}&
{\cal R}_k=\sum_{n\ge0}(-1)^n\frac{\ad^n_{\log{\cal Y}}{\cal L}_k}{n!}.
\end{eqnarray*}
\end{corollary}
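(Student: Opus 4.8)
The plan is to read both identities as instances of the Hadamard (conjugation) formula. First I would extract from the preceding proposition the conjugacy of ${\cal L}_k$ and ${\cal R}_k$: since ${\cal Y}^{(k)}={\cal L}_k{\cal Y}={\cal Y}{\cal R}_k$ and ${\cal Y}=1+\O(t)$ is invertible in $\Q\pol{Y}[\![t]\!]$, one gets at once
$$
{\cal L}_k={\cal Y}\,{\cal R}_k\,{\cal Y}^{-1}\quad\mbox{and}\quad{\cal R}_k={\cal Y}^{-1}\,{\cal L}_k\,{\cal Y}.
$$
So the entire content is to expand a conjugation $e^{\Omega}(\cdot)e^{-\Omega}$ as an $\ad_{\Omega}$-series.

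Next I would set $\Omega:=\log{\cal Y}$. Because ${\cal Y}$ is group-like for $\Delta_{\ministuffle}$ (Proposition \ref{Y}) and satisfies ${\cal Y}=1+\O(t)$, the series $\Omega$ is a well-defined proper series, primitive by Proposition \ref{logY}, and ${\cal Y}=\exp(\Omega)$. I would then invoke the Hadamard lemma in this complete filtered algebra: for any series $X$,
$$
e^{\Omega}\,X\,e^{-\Omega}=\sum_{n\ge0}\frac{\ad^n_{\Omega}X}{n!}.
$$
The quickest justification sets $f(s)=e^{s\Omega}Xe^{-s\Omega}$, checks $f'(s)=\ad_{\Omega}f(s)$, integrates to $f(s)=e^{s\,\ad_{\Omega}}X$, and evaluates at $s=1$.

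Finally I substitute. Taking $X={\cal R}_k$ yields ${\cal L}_k={\cal Y}{\cal R}_k{\cal Y}^{-1}=\sum_{n\ge0}\frac{\ad^n_{\log{\cal Y}}{\cal R}_k}{n!}$ (the symbol written $R$ in the statement should be read as ${\cal R}_k$, so that the formula holds for every $k$ and not merely for $k=1$, where ${\cal L}_1={\cal Y}R{\cal Y}^{-1}$ reduces it to the plain $R$). Taking instead $X={\cal L}_k$ and conjugating by ${\cal Y}^{-1}$, i.e. replacing $\Omega$ by $-\Omega$, turns the expansion into the alternating one, giving ${\cal R}_k={\cal Y}^{-1}{\cal L}_k{\cal Y}=\sum_{n\ge0}(-1)^n\frac{\ad^n_{\log{\cal Y}}{\cal L}_k}{n!}$. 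The one point that deserves care is not algebraic but topological: one must confirm that $\log{\cal Y}$, the exponentials, and the $\ad$-expansion all converge in $\Q\pol{Y}[\![t]\!]$. This is exactly where properness enters — since ${\cal Y}-1$ has $t$-valuation $\ge1$, only finitely many terms of each series contribute to a given coefficient of $t^n$ — and I expect no genuine obstacle to remain beyond this bookkeeping.
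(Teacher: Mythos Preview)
Your proposal is correct and follows exactly the paper's own argument: from ${\cal L}_k{\cal Y}={\cal Y}{\cal R}_k$ deduce ${\cal L}_k={\cal Y}{\cal R}_k{\cal Y}^{-1}=\exp(\log{\cal Y}){\cal R}_k\exp(-\log{\cal Y})=\exp(\ad_{\log{\cal Y}}){\cal R}_k$ and symmetrically for ${\cal R}_k$, then expand the exponential. Your additional justification of the Hadamard formula via $f(s)=e^{s\Omega}Xe^{-s\Omega}$ and your remark on summability in $\Q\pol{Y}[\![t]\!]$ are welcome details the paper omits, and your observation that the $R$ in the displayed statement should read ${\cal R}_k$ matches what the paper's own proof actually uses.
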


\begin{proof}
Since ${\cal L}_k{\cal Y}={\cal Y}{\cal R}_k$ then
${\cal L}_k={\cal Y}{\cal R}_k{\cal Y}^{-1}=\exp(\log{\cal Y}){\cal R}_k\exp(-\log{\cal Y})=\exp(\ad_{\log{\cal Y}}){\cal R}_k$ and then
${\cal R}_k={\cal Y}^{-1}{\cal L}_k{\cal Y}=\exp(-\log{\cal Y}){\cal L}_k\exp(\log{\cal Y})=\exp(\ad_{-\log{\cal Y}}){\cal L}_k$.
Expanding $\exp$, the results follow.
\end{proof}

\begin{proposition}\label{prim}
Let $\mathcal{G}$  be the Lie algebra generated by $\{R_n\}_{n\ge1}$ (resp.  $\{L_n\}_{n\ge1}$).
Then $\mathcal{G}=\mathrm{Prim}(\calH_{\ministuffle})$.
\end{proposition}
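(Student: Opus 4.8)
The plan is to prove the two opposite inclusions $\mathcal{G}\subseteq\mathrm{Prim}(\calH_{\ministuffle})$ and $\mathrm{Prim}(\calH_{\ministuffle})\subseteq\mathcal{G}$, and to treat the families $\{R_n\}_{n\ge1}$ and $\{L_n\}_{n\ge1}$ in parallel, since the two series $R$ and $L$ share the same ``leading'' primitive term. The first inclusion is immediate: by Corollary \ref{LR} the generating series $L$ and $R$ are primitive for $\Delta_{\ministuffle}$, so each homogeneous coefficient $L_n=\scal{L}{t^{n-1}}$ and $R_n=\scal{R}{t^{n-1}}$ is a primitive element; since $\mathrm{Prim}(\calH_{\ministuffle})$ is a Lie subalgebra, the Lie algebra generated by either family lands inside it.

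For the reverse (and substantial) inclusion I would set $\Omega=\log{\cal Y}$. By Proposition \ref{logY} we have $\scal{\Omega}{t^n}=\pi_1(y_n)$ and the family $\{\pi_1(y_n)\}_{n\ge1}$ generates $\mathrm{Prim}(\calH_{\ministuffle})$, so it suffices to show $\pi_1(y_n)\in\mathcal{G}$ for every $n\ge1$. Writing ${\cal Y}=\exp\Omega$ and applying the classical formula for the logarithmic derivative of an exponential to $L=\dot{\cal Y}{\cal Y}^{-1}$ and $R={\cal Y}^{-1}\dot{\cal Y}$ (exactly the $\ad_{\log{\cal Y}}$-expansion already exploited in the preceding corollary), I would obtain
$$
L=\Sum_{k\ge0}\Frac{1}{(k+1)!}\,\ad_{\Omega}^k(\dot\Omega)
\qquad\mbox{and}\qquad
R=\Sum_{k\ge0}\Frac{(-1)^k}{(k+1)!}\,\ad_{\Omega}^k(\dot\Omega),
$$
where $\dot\Omega=\Sum_{n\ge1}n\,\pi_1(y_n)\,t^{n-1}$. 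The key observation is then a valuation count: a factor $\Omega$ has $t$-valuation $\ge1$ and $\dot\Omega$ has $t$-valuation $\ge0$, so $\ad_{\Omega}^k(\dot\Omega)$ has $t$-valuation $\ge k$. Extracting the coefficient of $t^{n-1}$ from either expansion, the $k=0$ term contributes $n\,\pi_1(y_n)$, while every $k\ge1$ term is an iterated bracket of factors $\pi_1(y_{m_1}),\ldots,\pi_1(y_{m_k}),\pi_1(y_m)$ with $m_1+\cdots+m_k+m=n$ and hence with all indices strictly smaller than $n$. This yields the triangular identities
$$
L_n=n\,\pi_1(y_n)+\Xi_n
\qquad\mbox{and}\qquad
R_n=n\,\pi_1(y_n)+\Xi_n',
$$
where $\Xi_n,\Xi_n'$ are $\Q$-linear combinations of iterated Lie brackets of $\{\pi_1(y_j)\}_{1\le j<n}$.

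From here the induction closes: $L_1=R_1=\pi_1(y_1)\in\mathcal{G}$, and assuming $\pi_1(y_1),\ldots,\pi_1(y_{n-1})\in\mathcal{G}$, the elements $\Xi_n,\Xi_n'$ lie in $\mathcal{G}$ (being brackets of elements of $\mathcal{G}$), so $n\,\pi_1(y_n)=L_n-\Xi_n=R_n-\Xi_n'\in\mathcal{G}$ and therefore $\pi_1(y_n)\in\mathcal{G}$. Thus $\{\pi_1(y_n)\}_{n\ge1}\subseteq\mathcal{G}$, which forces $\mathrm{Prim}(\calH_{\ministuffle})\subseteq\mathcal{G}$ by Proposition \ref{logY}, and combined with the first inclusion gives equality for both generating families. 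I expect the only delicate points to be the rigorous justification of the exponential-derivative formulas in the completed series setting and the careful $t$-valuation bookkeeping needed to guarantee that $\Xi_n$ and $\Xi_n'$ involve \emph{only} lower-index generators; once that filtration argument is in place, the inductive extraction of the $\pi_1(y_n)$ is routine.
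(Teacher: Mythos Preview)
Your proof is correct, but it follows a different path from the paper's for the reverse inclusion $\mathrm{Prim}(\calH_{\ministuffle})\subseteq\mathcal{G}$. The paper does not go through the Magnus-type expansion $R=\sum_{k\ge0}\frac{(-1)^k}{(k+1)!}\ad_{\Omega}^k(\dot\Omega)$; instead it uses the Newton-type recursion $ny_n=\sum_{i=0}^{n-1}y_iR_{n-1-i}$ (from Corollary~\ref{LR}) and quasi-determinant identities to obtain the explicit associative expansion
\[
y_n=\sum_{w(J)=n}\frac{R^J}{\pi(J)}=\frac{R_n}{n}+\sum_{\substack{w(J)=n\\ l(J)>1}}\frac{R^J}{\pi(J)},
\]
which is triangular and weight-homogeneous. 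From this the paper concludes that $\{R_n\}_{n\ge1}$ is a ``new alphabet'', i.e.\ a free generating family of the associative algebra $\kY$, and then appeals (implicitly) to the fact that the primitive Lie algebra of a free cocommutative Hopf algebra coincides with the free Lie algebra on any free associative generating set of primitive elements.

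Your argument has the advantage of staying entirely inside the Lie algebra: the triangular relation $R_n=n\,\pi_1(y_n)+\Xi_n'$ that you extract from the $\ad_\Omega$-expansion is a relation among \emph{Lie} elements, so the induction produces each $\pi_1(y_n)$ directly as a Lie polynomial in $R_1,\ldots,R_n$, without invoking a dimension count or the enveloping-algebra argument. The paper's approach, by contrast, gives an explicit closed formula for $y_n$ in the $R_k$'s (useful elsewhere in the paper, e.g.\ for the correspondence with $\Psi^I$), but its passage from ``new alphabet'' to ``$\mathcal{G}=\mathrm{Prim}(\calH_{\ministuffle})$'' is left to the reader.
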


\begin{proof}
By Corollary \ref{LR}, one has on the one hand, 
\begin{eqnarray*}
\sum_{n\ge1}(\Delta_{\ministuffle}R_n)t^{n-1}
=1_{Y^*}\otimes\biggl(\sum_{n\ge1}R_n\;t^{n-1}\biggr)+\bigg(\sum_{n\ge1}R_n\;t^{n-1}\biggr)\otimes1_{Y^*}
=\sum_{n\ge1}(1_{Y^*}\otimes R_n+R_n\otimes1_{Y^*})t^{n-1}.
\end{eqnarray*}
Thus, by identifying the coefficients of $t^{n-1}$ in the first and last sums, on has
$\Delta_{\ministuffle}R_n=1_{Y^*}\otimes R_n+R_n\otimes1_{Y^*}$, meaning that $R_n$ is primitive.
On the other hand, according to basic properties of quasi-determinants (\cite{GR1,GR2}, see also \cite{GKLLRT}), one has
\begin{eqnarray*}
ny_n
=\left|\begin{matrix}
R_1&R_2&\ldots&R_{n-1}&\fbox{$R_n$}\cr
-1&R_1&\ldots&R_{n-2}&R_{n-1}\cr
0&-2&\ldots&R_{n-3}&R_{n-2}\cr
0&0&\ldots&-n+1&R_1
\end{matrix}\right|
=\left|\begin{matrix}
R_1&R_2&\ldots&R_{n-1}&\fbox{$R_n$}\cr
-1&R_1&\ldots&R_{n-2}&R_{n-1}\cr
0&-&\ldots&\frac12R_{n-3}&\frac12R_{n-2}\cr
0&0&\ldots&-1&\frac1{n+1}R_1
\end{matrix}\right|
\end{eqnarray*}
Hence, for any $J=(j_1,\ldots,j_n)\in(\N_+)^*$, by denoting $R^J=R_{j_1}\ldots R_{j_n}$, one obtains
\begin{eqnarray*}
y_n=\sum_{w(J)=n}\frac{R^J}{\pi(J)}=\frac{R_n}n+\sum_{w(J)=n,l(J)>1}\frac{R^J}{\pi(J)}.
\end{eqnarray*}
It means $y_n$ is triangular and homogeneous in weight in $\{R_k\}_{k\ge1}$. Conservely, $R_n$ is also triangular and homogeneous in weight in $\{y_k\}_{k\ge1}$. The $R_k$'s are then linearly independent and constitute a new alphabet.

In the same way, the $L_k$'s are primitive and linearly independent. The expected results follow .
\end{proof}

\begin{definition}
Let us define the families $\{\Pi^{(S)}_w\}_{w\in Y^*}$, for $S=L$ or $R$, of $\calH_{\ministuffle}$ as follows
$$\left\{\begin{array}{llllll}
\Pi^{(S)}_{y_n}&=&L_n\mbox{ if }S=L\mbox{ or }R_n\mbox{ if }S=R,&\mbox{for }y_n\in Y,\\
\Pi^{(S)}_{l}&=&[\Pi^{(S)}_s,\Pi^{(S)}_r],&\mbox{for }l\in\Lyn Y,\mbox{ with standard factorization of }l=(s,r),\\
\Pi^{(S)}_{w}&=&(\Pi^{(S)}_{l_1})^{i_1}\ldots(\Pi^{(S)}_{l_k})^{i_k},
&\mbox{for }w=l_1^{i_1}\ldots l_k^{i_k},l_1>\ldots>l_k,l_1\ldots,l_k\in\Lyn Y.
\end{array}\right.$$
\end{definition}

\begin{proposition}
Then, the families $\{\Pi^{(S)}_l\}_{l\in \Lyn Y}$ (resp. $\{\Pi^{(S)}_w\}_{w\in Y^*}$), for $S=L$ or $R$, are bases of $\mathrm{Prim}(\calH_{\ministuffle})$ (resp. $\calH_{\ministuffle}$), these bases are homogeneous in weight.
\end{proposition}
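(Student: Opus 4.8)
The plan is to identify $\{\Pi^{(S)}_w\}_{w\in Y^*}$ (for $S=L$ or $R$) as the image of the classical PBW--Lyndon basis $\{p_w\}_{w\in Y^*}$ of $\calU({\calL ie}_{\bf k}\langle Y\rangle)$ under the change of alphabet $y_n\mapsto S_n$, and then to inherit the basis property and the weight-grading by transport of structure. Indeed, the recursion defining $\{\Pi^{(S)}_w\}$ is literally the recursion defining $\{p_w\}$ with each letter $y_n$ replaced by $S_n$, each bracket by the corresponding bracket, and each concatenation product by the corresponding product.

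First I would establish, from Proposition \ref{prim}, that $\{S_n\}_{n\ge1}$ is a \emph{free} generating family of $\mathrm{Prim}(\calH_{\ministuffle})$, and not merely a generating, linearly independent one. Recall that $\mathrm{Prim}(\calH_{\ministuffle})\cong{\calL ie}_{\bf k}\langle\bar Y\rangle$ is free on the weight-graded alphabet $\bar Y=\{\pi_1(y_n)\}_{n\ge1}$, which carries exactly one generator in each weight $n\ge1$. The elements $S_n$ are primitive and homogeneous of weight $n$, and by Proposition \ref{prim} they generate $\mathrm{Prim}(\calH_{\ministuffle})$; since the weight-$n$ component of the space of indecomposables $\mathrm{Prim}(\calH_{\ministuffle})/[\mathrm{Prim}(\calH_{\ministuffle}),\mathrm{Prim}(\calH_{\ministuffle})]$ is one-dimensional and only $S_n$ has weight $n$, the class of $S_n$ must be a nonzero multiple of that of $\pi_1(y_n)$. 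Thus $\{S_n\}$ induces a weight-preserving, unipotent triangular substitution on generators (this is precisely the triangularity exhibited in Proposition \ref{prim}), so it is a free generating family. Consequently the assignment $y_n\mapsto S_n$ extends uniquely to a graded Lie algebra isomorphism
\[
\phi_S\colon\ {\calL ie}_{\bf k}\langle Y\rangle\ \xrightarrow{\ \sim\ }\ \mathrm{Prim}(\calH_{\ministuffle}).
\]

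Next I would pass to the enveloping algebras. By the Cartier--Quillen--Milnor--Moore isomorphism $\calH_{\ministuffle}\cong\calU(\mathrm{Prim}(\calH_{\ministuffle}))$ recalled above and the functoriality of $\calU$, the isomorphism $\phi_S$ extends to an algebra isomorphism $\calU({\calL ie}_{\bf k}\langle Y\rangle)\xrightarrow{\sim}\calH_{\ministuffle}$. Since $\phi_S$ sends letters to letters, commutes with Lie brackets, and commutes with products, an easy induction on the standard factorization and on the factorization $w=l_1^{i_1}\ldots l_k^{i_k}$ yields $\phi_S(p_l)=\Pi^{(S)}_l$ for every $l\in\Lyn Y$ and $\phi_S(p_w)=\Pi^{(S)}_w$ for every $w\in Y^*$. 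The classical Lyndon theorem (\cite{lyndon,reutenauer}) states that $\{p_l\}_{l\in\Lyn Y}$ is a basis of ${\calL ie}_{\bf k}\langle Y\rangle$ and $\{p_w\}_{w\in Y^*}$ is a basis of $\calU({\calL ie}_{\bf k}\langle Y\rangle)$; applying $\phi_S$ shows that $\{\Pi^{(S)}_l\}_{l\in\Lyn Y}$ is a basis of $\mathrm{Prim}(\calH_{\ministuffle})$ and $\{\Pi^{(S)}_w\}_{w\in Y^*}$ a basis of $\calH_{\ministuffle}$. Homogeneity is then automatic: $\phi_S$ is graded because $S_n$ has weight $n$, and each $p_w$ is homogeneous of weight $w(w)$, hence so is $\Pi^{(S)}_w$.

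The only genuinely delicate step is the freeness of $\{S_n\}$ in the second paragraph; all the rest is transport along $\phi_S$ and formal matching of recursions. I expect this to be the crux and would secure it exactly as above, via the weight-graded triangularity of Proposition \ref{prim}, which guarantees that the substitution $\pi_1(y_n)\mapsto S_n$ is invertible in each weight and therefore carries a free generating system to a free generating system.
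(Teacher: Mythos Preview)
Your argument is correct. The paper's own proof is a one-line sketch: ``These results (homogeneity, primitivity, linear independence) can be proved by induction on the length of Lyndon words.'' That sketch, taken at face value, establishes homogeneity and primitivity of each $\Pi^{(S)}_l$ by the obvious induction (letters are primitive and homogeneous; brackets preserve both), but leaves the linear independence and the spanning essentially unargued; these are only addressed in the subsequent Theorem via ``upper triangularity'' and PBW.

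Your route is different and more conceptual: you package everything as transport along a single graded Lie isomorphism $\phi_S:{\calL ie}_{\bf k}\langle Y\rangle\to\mathrm{Prim}(\calH_{\ministuffle})$, $y_n\mapsto S_n$, and then invoke the classical Lyndon/PBW theory on the source. The delicate point you correctly isolate is the \emph{freeness} of $\{S_n\}$, which you obtain from the weight-graded triangularity of Proposition~\ref{prim}: since $\mathrm{Prim}(\calH_{\ministuffle})$ is free on $\{\pi_1(y_n)\}$ with one generator per weight, and the $S_n$ generate and are homogeneous of distinct weights, the induced surjection of free Lie algebras is a graded surjection between spaces with identical finite-dimensional weight components, hence an isomorphism. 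This is exactly right, and it simultaneously yields independence and spanning for both $\{\Pi^{(S)}_l\}$ and $\{\Pi^{(S)}_w\}$ in one stroke. Compared with the paper's induction-plus-triangularity, your approach is cleaner and makes explicit \emph{why} the construction of $\{\Pi^{(S)}_w\}$ succeeds: it is literally the image of the standard PBW--Lyndon basis under a change of free generators. The paper's approach, by contrast, is more self-contained in that it does not explicitly invoke the known free-Lie-algebra structure on $\bar Y$, but it pays for this by deferring the genuine content (independence/spanning) to the next statement.
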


\begin{proof}
These results (homogeneity, primitivity, linear independence) can be proved by induction on the length of Lyndon words.
\end{proof}

\begin{definition}
Let $\{\Sigma^{(S)}_w\}_{w\in Y^*}$ be the family of $\calH_{\ministuffle}^{\vee}$ obtained by duality with $\{\Pi^{(S)}_w\}_{w\in Y^*}$~:
\begin{eqnarray*}
\forall u,v\in Y^*,&&\scal{\Pi^{(S)}_u}{\Sigma^{(S)}_v}=\delta_{u,v}.
\end{eqnarray*}
\end{definition}

\begin{theorem}
\begin{enumerate}
\item The family $\{\Pi^{(S)}_l\}_{l\in\Lyn Y}$ forms a basis of the Lie algebra generated by
$\mathrm{Prim}(\calH_{\ministuffle})$.
\item The family $\{\Pi^{(S)}_w\}_{w\in Y^*}$ forms a basis of ${\cal U}(\mathrm{Prim}(\calH_{\ministuffle}))$.
\item The family $\{\Sigma^{(S)}_w\}_{w\in Y^*}$ freely generates the quasi-shuffle algebra.
\item  The family $\{\Sigma^{(S)}_l\}_{l\in\Lyn Y}$ forms a transcendence basis of the quasi-shuffle algebra.
\end{enumerate}
\end{theorem}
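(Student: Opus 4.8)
The plan is to reduce all four assertions to the classical PBW--Lyndon theory of the free Lie algebra and to Radford's theorem on the shuffle algebra, transporting them along the change of primitive alphabet furnished by Proposition~\ref{prim}. Fix $S=R$, the case $S=L$ being word-for-word the same. Items~1 and~2 are already contained in the Proposition stated just before the theorem, read through the Cartier--Quillen--Milnor--Moore isomorphism $\calH_{\ministuffle}\cong\calU(\mathrm{Prim}(\calH_{\ministuffle}))$: item~1 is the assertion that $\{\Pi^{(S)}_l\}_{l\in\Lyn Y}$ is a weight-homogeneous basis of $\mathrm{Prim}(\calH_{\ministuffle})$, and item~2 that the decreasing-product family $\{\Pi^{(S)}_w\}_{w\in Y^*}$ is the associated Poincar\'e--Birkhoff--Witt basis of its enveloping algebra, which by CQMM is $\calH_{\ministuffle}$ itself.

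The device driving items~3 and~4 is an abstract totally ordered alphabet $A=\{a_n\}_{n\ge1}$, placed in order-isomorphism with $Y$ through $a_n\leftrightarrow y_n$. By Proposition~\ref{prim} the $R_n$ are primitive for $\Delta_{\ministuffle}$ and generate $\mathrm{Prim}(\calH_{\ministuffle})$ \emph{freely} as a Lie algebra; hence $a_n\mapsto R_n$ extends to an isomorphism of free Lie algebras ${\calL ie}_{\bf k}\pol{A}\stackrel{\sim}{\to}\mathrm{Prim}(\calH_{\ministuffle})$, and, applying $\calU$ and using CQMM, to an isomorphism of graded connected Hopf algebras
$$\phi:\ (\ncp{{\bf k}}{A},\bullet,\Delta_{\minishuffle})\ \stackrel{\sim}{\longrightarrow}\ \calH_{\ministuffle},$$
the source being the concatenation Hopf algebra over $A$ (free associative, with the letters $a_n$ primitive). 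Being simultaneously a Lie and an algebra morphism, $\phi$ carries the classical PBW--Lyndon basis $\{p_w\}_{w\in Y^*}$ built over $A$ — defined by $p_{a_n}=a_n$, $p_l=[p_s,p_r]$, $p_w=p_{l_1}^{i_1}\cdots p_{l_k}^{i_k}$ — onto $\{\Pi^{(S)}_w\}_{w\in Y^*}$, the two recursions matching term by term. Dualizing, $\phi^{\vee}:\calH_{\ministuffle}^{\vee}\stackrel{\sim}{\to}(\ncp{{\bf k}}{A},\shuffle)$ is an isomorphism of commutative algebras sending the quasi-shuffle product onto the shuffle product and, by transposition of dual bases, sending each $\Sigma^{(S)}_w$ onto the shuffle-dual element $s_w$ over $A$.

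Items~3 and~4 now follow by transport. Since $\{\Pi^{(S)}_w\}_{w\in Y^*}$ is a basis of the graded, weightwise finite-dimensional space $\calH_{\ministuffle}$, its dual family $\{\Sigma^{(S)}_w\}_{w\in Y^*}$ is automatically a linear basis of $\calH_{\ministuffle}^{\vee}$, i.e. of the quasi-shuffle algebra; this is item~3. For item~4, the classical Radford decomposition $s_w=\frac1{i_1!\cdots i_k!}\,s_{l_1}^{\shuffle i_1}\shuffle\cdots\shuffle s_{l_k}^{\shuffle i_k}$ (see \cite{reutenauer}) transports through $(\phi^{\vee})^{-1}$ to
$$\Sigma^{(S)}_w=\frac1{i_1!\cdots i_k!}\,(\Sigma^{(S)}_{l_1})^{\stuffle i_1}\stuffle\cdots\stuffle(\Sigma^{(S)}_{l_k})^{\stuffle i_k}.$$
As $w=l_1^{i_1}\cdots l_k^{i_k}$ runs over $Y^*$, the right-hand sides run, up to nonzero scalars, over all quasi-shuffle monomials in $\{\Sigma^{(S)}_l\}_{l\in\Lyn Y}$; since by item~3 these are linearly independent and spanning, the Lyndon family is algebraically independent and generates, i.e. is a transcendence basis of the quasi-shuffle algebra.

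The one genuinely delicate point is the construction of $\phi$, and it is localized entirely in Proposition~\ref{prim}: once the $R_n$ (resp. $L_n$) are known to be primitive and to generate $\mathrm{Prim}(\calH_{\ministuffle})$ freely, the map $a_n\mapsto R_n$ is an isomorphism of free Lie algebras, so $\phi=\calU(\,\cdot\,)$ is automatically a Hopf isomorphism and no further computation is needed. Everything else — the matching of the recursions $p_w\mapsto\Pi^{(S)}_w$, the transposition $\Sigma^{(S)}_w\mapsto s_w$, and the conclusions of items~3 and~4 — is then inherited verbatim from the shuffle case, the argument for $S=L$ being identical.
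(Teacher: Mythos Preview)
Your proof is correct and follows essentially the same route as the paper's: triangularity and primitivity of the $R_n$ (resp.\ $L_n$) from Proposition~\ref{prim} give free Lie generators, then PBW yields item~2, CQMM identifies $\calU(\mathrm{Prim}(\calH_{\ministuffle}))$ with $\calH_{\ministuffle}$, and items~3--4 follow by duality. The paper's own proof is a four-line sketch invoking exactly these ingredients; your version is simply more explicit, in particular by naming the transport isomorphism $\phi$ and pulling back Radford's decomposition to obtain the transcendence-basis statement, whereas the paper leaves item~4 as ``a consequence of the constructions'' without spelling them out.
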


\begin{proof}
The family $\{\Pi^{(S)}_l\}_{l\in\Lyn Y}$ of primitive upper triangular homogeneous in weight polynomials is free and
the first result follows. The second is a direct consequence of the Poincar\'e-Birkhoff-Witt theorem.
By the Cartier-Quillen-Milnor-Moore theorem, we get the third one and the last one is obtained
as a consequence of the constructions of $\{\Sigma^{(S)}_l\}_{l\in\Lyn Y}$ and $\{\Sigma^{(S)}_w\}_{w\in Y^*}$.
\end{proof}

\begin{corollary}
\begin{eqnarray*}
\calD_{\ministuffle}&=&\Prod_{l\in\Lyn Y}^{\searrow}\exp(\Sigma^{(S)}_l\otimes\Pi^{(S)}_l).
\end{eqnarray*}
\end{corollary}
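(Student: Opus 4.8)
The plan is to recognize this Corollary as a direct instance of Sch\"utzenberger's monoidal factorization, applied now to the PBW/dual pair $(\{\Pi^{(S)}_w\}_{w\in Y^*},\{\Sigma^{(S)}_w\}_{w\in Y^*})$ in place of the pair $(\{\Pi_w\},\{\Sigma_w\})$ used to obtain \pref{factorization2}. The preceding Theorem guarantees exactly the two facts the argument consumes: $\{\Pi^{(S)}_w\}_{w\in Y^*}$ is a PBW basis of $\calU(\mathrm{Prim}(\calH_{\ministuffle}))$, and $\{\Sigma^{(S)}_w\}_{w\in Y^*}$ is its dual family, freely generating the quasi-shuffle algebra. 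Everything below depends only on these two properties, so the derivation of \pref{factorization2} transfers verbatim.

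First I would use the basis-independence of the diagonal series. Since the pairing $\scal{\cdot}{\cdot}$ identifies $\ncs{{\bf k}}{Y}$ with $(\ncp{{\bf k}}{Y})^*$ and each homogeneous component is finite-dimensional, the reproducing kernel $\calD_{\ministuffle}=\sum_{w\in Y^*}w\otimes w$ coincides with $\sum_{w\in Y^*}e_w^*\otimes e_w$ for \emph{any} homogeneous basis $\{e_w\}_{w\in Y^*}$ with dual family $\{e_w^*\}_{w\in Y^*}$. Taking $e_w=\Pi^{(S)}_w$ and $e_w^*=\Sigma^{(S)}_w$ gives
$$\calD_{\ministuffle}=\sum_{w\in Y^*}\Sigma^{(S)}_w\otimes\Pi^{(S)}_w.$$

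Next I would invoke the PBW combinatorics of the dual family. On the decreasing factorization $w=l_1^{i_1}\ldots l_k^{i_k}$ the construction gives $\Pi^{(S)}_w=(\Pi^{(S)}_{l_1})^{i_1}\ldots(\Pi^{(S)}_{l_k})^{i_k}$, and by the Cartier-Quillen-Milnor-Moore duality between $\calU(\mathrm{Prim}(\calH_{\ministuffle}))$ and the quasi-shuffle algebra $\calH_{\ministuffle}^{\vee}$, the dual basis is recovered as the divided-power product $\Sigma^{(S)}_w=\frac{1}{i_1!\ldots i_k!}(\Sigma^{(S)}_{l_1})^{\stuffle i_1}\stuffle\ldots\stuffle(\Sigma^{(S)}_{l_k})^{\stuffle i_k}$, exactly as for the families $\Sigma_w$ of Subsection \ref{shuffles}. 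Substituting both expressions into the sum above and reorganizing the index set as decreasing sequences of Lyndon words with multiplicities, the sum factorizes in the completed algebra $\calH_{\ministuffle}^{\vee}\hat\otimes\calH_{\ministuffle}$, whose product is $(a\otimes b)(c\otimes d)=(a\stuffle c)\otimes(bd)$, into the decreasing ordered product
$$\calD_{\ministuffle}=\prod_{l\in\Lyn Y}^{\searrow}\sum_{i\ge0}\frac1{i!}(\Sigma^{(S)}_l)^{\stuffle i}\otimes(\Pi^{(S)}_l)^i.$$
Since $(\Sigma^{(S)}_l\otimes\Pi^{(S)}_l)^i=(\Sigma^{(S)}_l)^{\stuffle i}\otimes(\Pi^{(S)}_l)^i$ in this algebra, each inner sum is precisely $\exp(\Sigma^{(S)}_l\otimes\Pi^{(S)}_l)$, which yields the claim.

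The main obstacle is the divided-power identity for $\Sigma^{(S)}_w$ used in the third paragraph: one must show that the form dual to a PBW monomial $(\Pi^{(S)}_{l_1})^{i_1}\ldots(\Pi^{(S)}_{l_k})^{i_k}$ is exactly $\frac{1}{i_1!\ldots i_k!}$ times the iterated quasi-shuffle of the $\Sigma^{(S)}_{l_j}$. This rests on the fact that $\calH_{\ministuffle}^{\vee}$ is the \emph{graded} dual Hopf algebra, so that the quasi-shuffle product is dual to the coproduct of the enveloping algebra; the factorials then arise from the symmetrization of repeated primitive generators, precisely as in the shuffle case treated for $s_w$. Once this compatibility is secured, the reordering of the summation and the termwise convergence of the decreasing product are routine, both being controlled by the finite-dimensionality of the weight-homogeneous components (only finitely many Lyndon words contribute at each total weight).
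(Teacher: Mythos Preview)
Your proposal is correct and follows exactly the approach the paper intends: the Corollary is stated without proof in the paper, being an immediate consequence of the preceding Theorem (items 1--4) together with the same Sch\"utzenberger factorization argument that produced \pref{factorization2}. You have simply spelled out the details the paper leaves implicit---basis-independence of the diagonal series, the divided-power form of the dual family, and the reassembly into the ordered exponential product---all of which are the standard steps of that argument.
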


Note that any word $u=y_{i_1}\ldots y_{i_k}\in Y^*$ corresponds one by one to a composition of integers $I=(i_1,\ldots,i_k)\in(\N_+)^*$ (and the empty word $1_{Y^*}$ corresponds to the empty composition $\emptyset$).
Note also that noncommutative symmetric functions and quasi-symmetric functions can be indexed by words in $Y^*$ instead of by composions in $(\N_+)^*$. Indeed, let $J$ be a composition, finer than $I$, associated to the word $v$ and let $J=(J_1,\ldots,J_k)$ be the decomposition of $J$ such that, for any $p=1,\ldots,k,w(J_p)=i_p$ and $J_p$ is associated to the word $u_p$ whose $w(u_p)=i_p$. Then $v\preceq u=u_1\ldots u_k$ is a unique factorization and this will be denoted as a bracketing of the word $v$.

\begin{example}
One has
\begin{itemize}
\item $(1,2,2)\preceq (1,(1,1),2)=(1,1,1,2)\longleftrightarrow y_1y_2y_2\preceq y_1(y_1y_1)y_2=y_1y_1y_1y_2$.
\item $(1,2,2)\preceq (1,2,(1,1))=(1,2,1,1)\longleftrightarrow y_1y_2y_2\preceq y_1y_2(y_1y_1)=y_1y_2y_1y_1$.
\item $(1,2,2)\preceq (1,(1,1),(1,1))=(1,1,1,1,1)\longleftrightarrow y_1y_2y_2\preceq y_1(y_1y_1)(y_1y_1)
\allowbreak=y_1y_1y_1y_1y_1$.
\end{itemize}
\end{example}

Hence, we can state the following

\begin{definition} 
Let $\calS$ and $\calM$ be the following linear maps
\begin{eqnarray*}
\calS:({\bf k}\langle Y\rangle,\bullet,1,\Delta_{\ministuffle},{\tt e})
&\longrightarrow&
({\bf k}\langle S_1,S_2,\ldots\rangle,\bullet,1,\Delta_{\star},\epsilon),\\
u=y_{i_1}\ldots y_{i_k}&\longmapsto&\calS(u)=S^{(i_1,\ldots,i_k)}=S_{i_1}\ldots S_{i_k},\\
\calM:({\bf k}\langle Y\rangle,\stuffle,1,\Delta_{\bullet},{\tt e})
&\longrightarrow&
({\bf k}\langle M_1,M_2,\ldots\rangle,\star,1,\Delta_{\bullet},\varepsilon),\\
u=y_{i_1}\ldots y_{i_k}&\longmapsto&\calM(u)=M_{(i_1,\ldots,i_k)}=M_{i_1}\ldots M_{i_k}.
\end{eqnarray*}
\end{definition}

\begin{theorem}\label{isomorphisms}
The maps $\calS$ and $\calM$ are isomorphisms of Hopf algebras.
\end{theorem}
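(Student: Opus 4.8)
The plan is to check, for each map, that it is a linear bijection intertwining products, coproducts and counits, and then to note that the two statements are linked by duality. Both maps are linear bijections for trivial reasons: $\calS$ sends the word basis $\{u\}_{u\in Y^*}$ onto the homogeneous basis $\{S^I\}_{I\in(\N_+)^*}$ under the correspondence $u=y_{i_1}\ldots y_{i_k}\leftrightarrow I=(i_1,\ldots,i_k)$, and likewise $\calM$ sends $\{u\}_{u\in Y^*}$ onto the monomial basis $\{M_I\}_{I\in(\N_+)^*}$. So everything reduces to verifying compatibility with the algebraic structures.

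For $\calS$ the product compatibility is immediate, since concatenation of words maps to concatenation of the $S_n$, i.e. $\calS(u\bullet v)=\calS(u)\bullet\calS(v)$. For the coproduct I would first record the generator identity $\Delta_{\ministuffle}y_k=\Delta_{\minishuffle}y_k+\Delta_+y_k=\Sum_{i+j=k}y_i\otimes y_j$ (with $y_0=1_{Y^*}$), which under $\calS\otimes\calS$ becomes $\Sum_{i+j=k}S_i\otimes S_j=\Delta_\star S_k$, precisely the defining formula for $\Delta_\star$ on the generator $S_k$. Since both $\Delta_{\ministuffle}$ and $\Delta_\star$ are algebra morphisms for the respective concatenation products and $\calS$ is multiplicative, agreement on the generators propagates to all of ${\bf k}\langle Y\rangle$, giving $\Delta_\star\circ\calS=(\calS\otimes\calS)\circ\Delta_{\ministuffle}$. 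The counits match because ${\tt e}$ and $\epsilon$ both extract the coefficient of the empty word, which $\calS$ preserves. Thus $\calS$ is a bijective bialgebra morphism; as source and target are connected $\N$-graded bialgebras the antipode is automatic, so $\calS$ is an isomorphism of Hopf algebras.

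For $\calM$ the roles of product and coproduct are exchanged, reflecting the passage to the dual structures $\calH_{\ministuffle}^{\vee}$ and ${\bf QSym_k}$. Here the coproduct compatibility is the transparent one: $\Delta_\bullet$ is deconcatenation on both sides, and the splitting of a word into prefixes and suffixes matches termwise the splitting $\Delta_\bullet M_I=\Sum_{I_1.I_2=I}M_{I_1}\otimes M_{I_2}$. The product compatibility $\calM(u\stuffle v)=\calM(u)\star\calM(v)$ is proved by induction on $l(u)+l(v)$: using $\calM(y_k\cdot w)=M_k\bullet\calM(w)$, the recursive definition of $\stuffle$ maps termwise, under $\calM$, onto the recursion $M_I\star M_J=M_i(M_{I'}\star M_J)+M_j(M_I\star M_{J'})+M_{i+j}(M_{I'}\star M_{J'})$, the leading letters $y_i,y_j,y_{i+j}$ matching the leading factors $M_i,M_j,M_{i+j}$. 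With the counits again matching, $\calM$ is a bijective bialgebra morphism, hence an isomorphism of Hopf algebras.

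Finally, the two statements are tied together by duality: one checks at once that $\scal{\calS(u)}{\calM(v)}_{\mathrm{ext}}=\scal{S^I}{M_J}_{\mathrm{ext}}=\delta_{I,J}=\scal{u}{v}$, so $\calM$ is the transpose of $\calS^{-1}$ with respect to the canonical word pairing and the external pairing. This is both a consistency check and an alternative route, since transposing the isomorphism $\calS:\calH_{\ministuffle}\to{\bf Sym_k}$ yields the isomorphism $\calM:\calH_{\ministuffle}^{\vee}\to{\bf QSym_k}$ of the dual Hopf algebras for free. The only genuinely non-formal points in the whole argument are the two recursion-matching computations — the generator identity $\Delta_{\ministuffle}y_k=\Sum_{i+j=k}y_i\otimes y_j$ on the $\calS$ side, and the termwise coincidence of $\stuffle$ with $\star$ on the $\calM$ side — and both amount to lining up the definitions rather than to a real obstacle.
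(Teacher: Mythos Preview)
The paper states Theorem~\ref{isomorphisms} without proof --- it proceeds directly to the corollaries --- so there is no argument in the paper to compare against. Your proof is correct and supplies precisely the verification the paper omits: bijectivity via the word--composition correspondence, product and coproduct compatibility for $\calS$ checked on generators (using that both $\Delta_{\ministuffle}$ and $\Delta_\star$ are concatenation morphisms and that $\Delta_{\ministuffle}y_k=\sum_{i+j=k}y_i\otimes y_j$ matches $\Delta_\star S_k$), and the termwise matching of the $\stuffle$ and $\star$ recursions for $\calM$. The closing duality remark, identifying $\calM$ as the transpose of $\calS^{-1}$, is a clean consistency check and also the shortest way to obtain the second isomorphism from the first.
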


\begin{corollary}
Let $\cal G$  be the Lie algebra generated by $\{\Pi_{y}\}_{y\in Y}$.
Then, we have ${\bf Sym_k}\cong{\cal U}({\cal G}).$
\end{corollary}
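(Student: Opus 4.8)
The plan is to deduce the statement by chaining three facts already at our disposal: the identification of the generating set of ${\cal G}$ with the primitive polynomials $\pi_1$, the Cartier--Quillen--Milnor--Moore (CQMM) description of $\calH_{\ministuffle}$, and the Hopf algebra isomorphism $\calS$ of Theorem \ref{isomorphisms}. The whole argument is a matter of transporting structure along established isomorphisms.

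First I would unwind the definition of ${\cal G}$. By the construction of the PBW--Lyndon family, the generators are $\Pi_{y_n}=\pi_1(y_n)$ for $y_n\in Y$. In the proof of Proposition \ref{logY} it is shown that $\scal{\log{\cal Y}}{t^n}=\pi_1(y_n)$, so the Lie algebra generated by $\{\Pi_y\}_{y\in Y}$ is the same as the Lie algebra generated by $\{\scal{\log{\cal Y}}{t^n}\}_{n\ge1}$. Proposition \ref{logY} then yields at once
$${\cal G}=\mathrm{Prim}(\calH_{\ministuffle}).$$

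Next I would appeal to CQMM, recorded earlier in Section \ref{shuffles}, which asserts that the connected $\N$-graded co-commutative Hopf algebra $\calH_{\ministuffle}$ is isomorphic, as a Hopf algebra, to the enveloping algebra of its primitives. Combining this with the identification above gives $\calH_{\ministuffle}\cong\calU(\mathrm{Prim}(\calH_{\ministuffle}))=\calU({\cal G})$. Finally, Theorem \ref{isomorphisms} supplies the Hopf algebra isomorphism $\calS:\calH_{\ministuffle}\to{\bf Sym_k}$, so that ${\bf Sym_k}\cong\calH_{\ministuffle}$; composing the two isomorphisms produces ${\bf Sym_k}\cong\calU({\cal G})$, as desired.

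There is no genuine obstacle here, as the conclusion is purely formal once the ingredients are lined up. The single point deserving attention is the matching $\Pi_{y_n}=\pi_1(y_n)=\scal{\log{\cal Y}}{t^n}$, which is what licenses the application of Proposition \ref{logY} to the generating set $\{\Pi_y\}_{y\in Y}$ rather than to $\{\scal{\log{\cal Y}}{t^n}\}_{n\ge1}$; once this is noted, the rest follows from the two isomorphisms.
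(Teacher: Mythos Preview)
Your argument is correct and is precisely the route the paper intends: the corollary is stated without proof immediately after Theorem \ref{isomorphisms}, and your chain $\Pi_{y_n}=\pi_1(y_n)$, ${\cal G}=\mathrm{Prim}(\calH_{\ministuffle})$ (via Proposition \ref{logY} or, equivalently, the fact recorded in its proof that $\{\pi_1(y_n)\}_{n\ge1}$ freely generates $\mathrm{Prim}(\calH_{\ministuffle})$), CQMM, and the isomorphism $\calS$ of Theorem \ref{isomorphisms} is exactly the deduction the surrounding text sets up.
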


\begin{corollary}
The families $\{\calM(l)\}_{l\in\Lyn Y}$ and $\{\calM(\Sigma_l)\}_{l\in\Lyn Y}$ are transcendence bases of  the free commutative $\bf k$-algebra ${\bf QSym_k}$.
\end{corollary}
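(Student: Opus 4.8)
The plan is to transport, through the Hopf algebra isomorphism $\calM$ of Theorem \ref{isomorphisms}, the transcendence-basis results already available on the quasi-shuffle side. Recall that $\calM$ is in particular an isomorphism of commutative ${\bf k}$-algebras between the quasi-shuffle algebra $({\bf k}\langle Y\rangle,\stuffle)$ and $({\bf QSym_k},\star)$. Since an isomorphism of commutative algebras carries a free polynomial generating set to a free polynomial generating set, equivalently a transcendence basis to a transcendence basis, it suffices to check that each of $\{l\}_{l\in\Lyn Y}$ and $\{\Sigma_l\}_{l\in\Lyn Y}$ is a transcendence basis of $\calH_{\ministuffle}^{\vee}=({\bf k}\langle Y\rangle,\stuffle)$, and then to take images under $\calM$.

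For $\{\Sigma_l\}_{l\in\Lyn Y}$ there is nothing new to establish. By construction $\{\Sigma_w\}_{w\in Y^*}$ is the dual family of the PBW-Lyndon basis $\{\Pi_w\}_{w\in Y^*}$ of $\calU(\mathrm{Prim}(\calH_{\ministuffle}))$, so the Cartier--Quillen--Milnor--Moore structure of $\calH_{\ministuffle}^{\vee}$ shows that the subfamily indexed by Lyndon words freely generates it; this is precisely the transcendence-basis statement recorded earlier. Pushing forward along $\calM$ immediately gives that $\{\calM(\Sigma_l)\}_{l\in\Lyn Y}$ is a transcendence basis of ${\bf QSym_k}$.

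For $\{l\}_{l\in\Lyn Y}$ I would reduce to the previous case by a triangularity argument. The recursive definition of $\Sigma_l$ exhibits it as $\Sigma_l = l + (\text{correction terms})$, the corrections being supported on words distinct from $l$ and comparable to $l$ for the order that compares first by weight and then lexicographically; the contractions $y_{s'_1+\cdots+s'_i}$ only shorten or reorder the underlying word, so that the transition matrix from $\{l\}_{l\in\Lyn Y}$ to $\{\Sigma_l\}_{l\in\Lyn Y}$ is unitriangular, hence invertible over ${\bf k}$. Consequently the two families generate the same subalgebra, namely all of $\calH_{\ministuffle}^{\vee}$, and $\{l\}_{l\in\Lyn Y}$ is itself a transcendence basis. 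Equivalently, this is the quasi-shuffle analogue, due to Hoffman, of Radford's theorem asserting that Lyndon words freely generate the shuffle algebra. Transporting along $\calM$ then yields that $\{\calM(l)\}_{l\in\Lyn Y}=\{M_I : I \text{ a Lyndon composition}\}$ is a transcendence basis of ${\bf QSym_k}$.

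The step I expect to be the main obstacle is the unitriangularity claim relating $\{l\}$ and $\{\Sigma_l\}$. Because $\stuffle$ is not weight-homogeneous, the term $\mu(y_i,y_j)=y_{i+j}$ contracting two letters into one, one cannot argue degree by degree as in the pure shuffle setting; instead one must order words by weight first and then lexicographically, and verify carefully that $l$ is indeed the minimal term occurring in $\Sigma_l$ for this order. Once this leading-term analysis is secured, everything else is a formal transport of structure along the isomorphism $\calM$.
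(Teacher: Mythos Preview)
The paper states this corollary without proof; the intended argument is simply transport along the Hopf-algebra isomorphism $\calM$ of Theorem~\ref{isomorphisms}, applied to the transcendence bases already established on the $\calH_{\ministuffle}^{\vee}$ side. Your proposal follows exactly this line, and for $\{\Sigma_l\}_{l\in\Lyn Y}$ your reasoning is correct and matches the paper's implicit route.

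For $\{l\}_{l\in\Lyn Y}$, however, your triangularity argument has a genuine gap as written. You assert that ``the transition matrix from $\{l\}_{l\in\Lyn Y}$ to $\{\Sigma_l\}_{l\in\Lyn Y}$ is unitriangular'', but $\Sigma_l$ is not a linear combination of Lyndon words: the correction terms in $\Sigma_l$ are arbitrary words of the same weight, most of them non-Lyndon. So there is no square matrix indexed by $\Lyn Y$ to speak of, and a linear relation $\Sigma_l=l+\text{(other words)}$ does not by itself imply that the $\stuffle$-subalgebras generated by the two families coincide. The conclusion you want is true, but the route that actually works is the one you mention only in passing: either invoke Hoffman's theorem directly (Lyndon words freely generate $(\kY,\stuffle)$), or argue via the length filtration within each weight, whose associated graded is $(\kY,\shuffle)$, and then apply Radford. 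Your own final paragraph correctly anticipates that this step is the delicate one; the fix is to replace the ill-posed ``transition matrix between Lyndon-indexed families'' by one of these two standard arguments.
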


\begin{corollary}
Let $w=i_1\ldots i_k\in Y^*$ associated to $I=(i_1,\ldots,i_k)\in(\N_+)^*$. Then, we have
$$\begin{array}{rcccl}
S^I=\calS(w),&&\Frac{\Phi^I}{\pi(I)}=\calS(\pi_1(y_{i_1})\ldots\pi_1(y_{i_k})),&&\Psi^I=\calS(R_w).
\end{array}$$
\end{corollary}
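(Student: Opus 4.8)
The plan is to reduce all three identities, via the fact that $\calS$ is an algebra morphism for the concatenation product (Theorem \ref{isomorphisms}), to three one-letter identities and then invoke multiplicativity. The first identity $S^I=\calS(w)$ is nothing but the definition of $\calS$, since $\calS(y_{i_1}\ldots y_{i_k})=S_{i_1}\ldots S_{i_k}=S^I$. The key structural observation I would exploit is that $\calS$, extended $t$-linearly to formal power series in the central variable $t$, sends the generating series ${\cal Y}(t)=1+\Sum_{n\ge1}y_n\,t^n$ to $\sigma(t)=1+\Sum_{n\ge1}S_n\,t^n$, simply because $\calS(y_n)=S_n$. Because $t$ is central and $\calS$ respects the weight grading (so that the coefficient of each $t^n$ is a finite combination), $\calS$ then commutes with the operations $\exp$, $\log$, inversion, and $d/dt$ carried out in the concatenation algebra.

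For the second identity I would apply $\calS$ to ${\cal Y}=\exp(\log{\cal Y})$, obtaining $\sigma=\exp(\calS(\log{\cal Y}))$; comparing with $\sigma=\exp(\Phi)$ from \pref{psi(t)} and using injectivity of $\exp$ gives $\calS(\log{\cal Y})=\Phi(t)$. Reading off the coefficient of $t^n$, and using $\scal{\log{\cal Y}}{t^n}=\pi_1(y_n)$ (established in Proposition \ref{logY}) together with $\Phi(t)=\Sum_{n\ge1}\Phi_n t^n/n$, yields $\calS(\pi_1(y_n))=\Phi_n/n$. Multiplying over the letters of $w$ and recalling $\pi(I)=i_1\cdots i_k$ then gives $\calS(\pi_1(y_{i_1})\ldots\pi_1(y_{i_k}))=\Phi^I/\pi(I)$.

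For the third identity I would apply $\calS$ to the relation $\dot{\cal Y}={\cal Y}R$ of Corollary \ref{LR}; since $\calS$ commutes with $d/dt$ this gives $\dot\sigma=\sigma\,\calS(R)$. Comparing with $\dot\sigma=\sigma\psi$ from \pref{psi(t)} and cancelling the invertible factor $\sigma$ forces $\calS(R)=\psi(t)$, whence $\calS(R_n)=\Psi_n$ upon identifying the coefficient of $t^{n-1}$. Writing $R_w=R_{i_1}\ldots R_{i_k}$ and using the multiplicativity of $\calS$ then gives $\calS(R_w)=\Psi_{i_1}\ldots\Psi_{i_k}=\Psi^I$.

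The step I would treat most carefully, and the only genuinely delicate point, is the justification that $\calS$ intertwines the operations $\exp$, $\log$ and $d/dt$ on the two sides. This rests on $t$ being central, on $\calS$ being a concatenation morphism, and on the $t$-adic (equivalently, weight-graded) finiteness of each coefficient, so that no convergence issue arises and the formal identities may legitimately be compared coefficient by coefficient; once this is in place, everything else is a matter of comparing the defining exponential and differential relations for $\sigma$ with those for ${\cal Y}$.
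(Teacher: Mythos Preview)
Your argument is correct and follows essentially the same route as the paper: transport the generating series ${\cal Y}(t)$ to $\sigma(t)$ via $\calS$, use that $\calS$ commutes with $\log$ (or, equivalently, with $\exp$) and with $d/dt$ to identify $\calS(\log{\cal Y})=\Phi$ and $\calS(R)=\psi$, read off the one-letter identities $\calS(\pi_1(y_n))=\Phi_n/n$ and $\calS(R_n)=\Psi_n$, and conclude by multiplicativity. Your treatment is in fact more explicit than the paper's, which simply asserts that $\calS$ is continuous and commutes with $\log$ and then writes down the resulting generating-series identities.
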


\begin{proof}
On the one hand, the power series ${\cal Y},\log{\cal Y}$ and $L,R\in{\bf k}\pol{Y}[\![t]\!]$ are summable. On the other hand, by (\ref{sigma(t)}) and  (\ref{psi(t)}), since $\calS$ is continuous and commutes with $\log$, one can deduce
$$\begin{array}{rlrcccl}
&&\sigma(t)&=&\calS({\cal Y}(t))&=&1+\Sum_{k\ge1}\calS(y_k)t^k,\\
\Sum_{k\ge1}\frac{\Phi_k}kt^k&=&\log\sigma(t)&=&\calS(\log{\cal Y}(t))&=&\Sum_{k\ge1}\calS(\pi_1(y_k))t^k,\\
\Sum_{k\ge1}\Psi_kt^{k-1}&=&\psi(t)&=&\calS(R(t))&=&\Sum_{k\ge1}\calS(R_k)t^{k-1},\\
\Sum_{k\ge1}t^{k-1}\Psi_k^*&=&\psi^*(t)&=&\calS(L(t))&=&\Sum_{k\ge1}\calS(L_k)t^{k-1}.
\end{array}$$
Thus, the expected result follows immediately.
\end{proof}

\subsection{Dual bases for noncommutative symmetric and quasi-symmetric functions via
Sch\"utzenberger's monoidal factorization}

\begin{definition}
With the notations of (\ref{factorization2}), let us consider the following noncommutative generating series
$\{\calM(w)\}_{w\in Y^*}$ and $\{\calS(w)\}_{w\in Y^*}$
\begin{eqnarray*}
\mathrm{M}=
\Sum_{w\in Y^*}\calM(w)\;w\in{\bf QSym}_{\bf k}\series{Y}
&\mbox{and}&
\mathrm{S}=\Sum_{w\in Y^*}\calS(w)\;w\in{\bf Sym}_{\bf k}\series{Y}.
\end{eqnarray*}
\end{definition}

\begin{proposition}\label{M}
For the coproduct $\Delta_{\ministuffle}$, using (\ref{factorization2}), we obtain 
\begin{enumerate}
\item The noncommutative generating series $\mathrm{M}$ is group-like.
\item The noncommutative generating series $\log\mathrm{M}$ is primitive.
\end{enumerate}
\end{proposition}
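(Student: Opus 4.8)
The plan is to realize $\mathrm{M}$ as the image of the diagonal series under $\calM\otimes\mathrm{Id}$ and to push the factorization (\ref{factorization2}) through this morphism. Indeed $\mathrm{M}=\sum_{w\in Y^*}\calM(w)\,w=(\calM\otimes\mathrm{Id})\calD_{\ministuffle}$, and since $\calM$ is an algebra morphism from $(\,{\bf k}\langle Y\rangle,\stuffle)$ to $({\bf QSym}_{\bf k},\star)$ (Theorem \ref{isomorphisms}), the map $\calM\otimes\mathrm{Id}$ is a weight-continuous algebra morphism from $\calH_{\ministuffle}^{\vee}\hat\otimes\calH_{\ministuffle}$ into ${\bf QSym}_{\bf k}\hat\otimes\calH_{\ministuffle}$. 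It therefore commutes with $\exp$ and with the ordered product, so applying it to (\ref{factorization2}) gives
\begin{eqnarray*}
\mathrm{M}&=&\prod_{l\in\Lyn Y}^{\searrow}\exp\bigl(\calM(\Sigma_l)\otimes\Pi_l\bigr).
\end{eqnarray*}

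For the first assertion I would argue factor by factor. Each $\Pi_l$ lies in $\mathrm{Prim}(\calH_{\ministuffle})$, so $\Delta_{\ministuffle}\Pi_l=\Pi_l\otimes1_{Y^*}+1_{Y^*}\otimes\Pi_l$; as $\calM(\Sigma_l)\in{\bf QSym}_{\bf k}$ is a scalar for the $Y$-coproduct and $\Delta_{\ministuffle}$ is a morphism for the concatenation, the element $\calM(\Sigma_l)\otimes\Pi_l$ is again primitive, whence $\exp(\calM(\Sigma_l)\otimes\Pi_l)$ is group-like (for a primitive $p$ one has $\Delta_{\ministuffle}\exp(p)=\exp(p\otimes1_{Y^*})\exp(1_{Y^*}\otimes p)=\exp(p)\otimes\exp(p)$, the two middle factors commuting). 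Since the group-like series form a group under concatenation, the ordered product $\mathrm{M}$ is group-like, and ${\tt e}(\mathrm{M})=\calM(1_{Y^*})=1$ completes the point. One may equivalently verify $\Delta_{\ministuffle}\mathrm{M}=\mathrm{M}\hat\otimes\mathrm{M}$ directly from $\scal{\Delta_{\ministuffle}w}{u\otimes v}=\scal{w}{u\stuffle v}$ together with $\calM(u\stuffle v)=\calM(u)\star\calM(v)$, the $\star$-product being exactly what recombines the ${\bf QSym}_{\bf k}$-coefficients in $\mathrm{M}\hat\otimes\mathrm{M}$.

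The second assertion is then the standard fact that the logarithm of a group-like series is primitive. Concretely, $\mathrm{M}=1_{Y^*}+(\text{terms of positive weight})$, so $\log\mathrm{M}$ is a well-defined series; as $\Delta_{\ministuffle}$ is a continuous concatenation morphism, $\Delta_{\ministuffle}\log\mathrm{M}=\log\Delta_{\ministuffle}\mathrm{M}=\log(\mathrm{M}\hat\otimes\mathrm{M})$. Factoring $\mathrm{M}\hat\otimes\mathrm{M}=(\mathrm{M}\hat\otimes1_{Y^*})(1_{Y^*}\hat\otimes\mathrm{M})$ into two commuting series, the logarithm splits as $\log\mathrm{M}\hat\otimes1_{Y^*}+1_{Y^*}\hat\otimes\log\mathrm{M}$, which is precisely the primitivity of $\log\mathrm{M}$.

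The main point to get right, rather than a genuine obstacle, is the bookkeeping of the ${\bf QSym}_{\bf k}$-coefficients against the coproduct: one must make precise that $\calM\otimes\mathrm{Id}$ is an algebra morphism for the $\star$-product on the left tensor slot (so that it passes through $\exp$ and the ordered product) and that the pertinent notion of group-likeness for $\mathrm{M}$ is the one combining coefficients via $\star$, matching $\calM(u\stuffle v)=\calM(u)\star\calM(v)$. Granting this, together with the summability by weight that legitimizes the infinite product and the logarithm, both statements follow immediately.
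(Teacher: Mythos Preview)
Your proof is correct. The paper's own argument for part~1 is considerably terser: it simply invokes Friedrichs' criterion, i.e.\ the fact that a series $S=\sum_w c_w\,w$ is group-like for $\Delta_{\ministuffle}$ precisely when the coefficient map $w\mapsto c_w$ is a character of the stuffle algebra, which holds here because $\calM$ is, by Theorem~\ref{isomorphisms}, an algebra morphism $(\,{\bf k}\langle Y\rangle,\stuffle)\to({\bf QSym}_{\bf k},\star)$. This is exactly the ``direct verification'' you sketch as an alternative. Your primary route instead pushes the factorization~(\ref{factorization2}) through $\calM\otimes\mathrm{Id}$ and argues factor by factor that $\exp(\calM(\Sigma_l)\,\Pi_l)$ is group-like; this is a legitimate and somewhat more constructive approach, and it has the virtue of actually \emph{using} (\ref{factorization2}) as the statement suggests, while also yielding the product formula for $\mathrm{M}$ that appears in the next corollary as a by-product. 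The paper's proof buys brevity; yours buys an explicit link to the monoidal factorization and avoids citing an external criterion. For part~2 both proofs coincide: $\log$ of a group-like series is primitive.
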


\begin{proof}
\begin{enumerate}
\item It follows Friedrichs' criterion \cite{acta}.
\item By using the previous result and by applying the $\log$ map on the power series $\mathrm{M}$, we get the expected result.
\end{enumerate}
\end{proof}

\begin{corollary}
\begin{eqnarray*}
\mathrm{M}=\Prod_{l\in\Lyn Y}^{\searrow}\exp(\calM(\Sigma_l)\;\Pi_l)\in{\bf QSym_k}\series{Y}
&\mbox{and}&
\log\mathrm{M}=\Sum_{w\in Y^*}\calM(w)\;\pi_1(w)\in{\bf QSym_k}\langle\series{Y}.
\end{eqnarray*}
\end{corollary}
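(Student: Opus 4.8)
The plan is to read off both identities from the extended Sch\"utzenberger factorization (\ref{factorization2}) of $\calD_{\ministuffle}$ by transporting it through the map $\calM$ applied to the \emph{left} tensor leg. The starting observation is that, under the identification of ${\bf QSym_k}\hat\otimes\calH_{\ministuffle}$ with ${\bf QSym_k}\series{Y}$ sending $a\otimes w$ to the monomial $a\,w$, one has $\mathrm{M}=(\calM\otimes\mathrm{Id})\calD_{\ministuffle}$, since $(\calM\otimes\mathrm{Id})\Sum_{w\in Y^*}w\otimes w=\Sum_{w\in Y^*}\calM(w)\,w$. Because the left leg of $\calD_{\ministuffle}$ carries the quasi-shuffle product (it lives in $\calH_{\ministuffle}^{\vee}$) and $\calM$ is, by Theorem \ref{isomorphisms}, an isomorphism of Hopf algebras from $(\ncp{{\bf k}}{Y},\stuffle)$ onto $({\bf QSym_k},\star)$, everything built from the left leg by products and exponentials is respected by $\calM$.

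For the first identity I would apply $\calM\otimes\mathrm{Id}$ to (\ref{factorization2}). Each factor expands as $\exp(\Sigma_l\otimes\Pi_l)=\Sum_{n\ge0}\frac1{n!}\,\Sigma_l^{\stuffle n}\otimes\Pi_l^{n}$, the left power being taken for $\stuffle$ and the right power for concatenation; since $\calM(\Sigma_l^{\stuffle n})=\calM(\Sigma_l)^{\star n}$, the image of this factor is $\exp(\calM(\Sigma_l)\otimes\Pi_l)$, i.e. $\exp(\calM(\Sigma_l)\,\Pi_l)$ after the identification above. As $\calM$ is a continuous algebra morphism it commutes with the decreasing ordered product $\Prod_{l\in\Lyn Y}^{\searrow}$, whence $\mathrm{M}=\Prod_{l\in\Lyn Y}^{\searrow}\exp(\calM(\Sigma_l)\,\Pi_l)$.

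For the second identity I would instead feed the reconstruction formula (\ref{exp1}) into $\mathrm{M}=\Sum_{w\in Y^*}\calM(w)\,w$. Substituting $w=\Sum_{k\ge0}\frac1{k!}\Sum_{u_1,\dots,u_k}\scal{w}{u_1\stuffle\dots\stuffle u_k}\,\pi_1(u_1)\dots\pi_1(u_k)$ and exchanging the (weight-graded, hence locally finite) summations, the coefficient collapses by linearity and the morphism property to $\Sum_w\scal{w}{u_1\stuffle\dots\stuffle u_k}\calM(w)=\calM(u_1\stuffle\dots\stuffle u_k)=\calM(u_1)\star\dots\star\calM(u_k)$. Setting $N=\Sum_{u\in Y^*}\calM(u)\,\pi_1(u)\in{\bf QSym_k}\series{Y}$ and observing that in ${\bf QSym_k}\series{Y}$ coefficients multiply by $\star$ while the $Y$-parts concatenate, one recognizes $N^k=\Sum_{u_1,\dots,u_k}[\calM(u_1)\star\dots\star\calM(u_k)]\,[\pi_1(u_1)\dots\pi_1(u_k)]$, so that $\mathrm{M}=\Sum_{k\ge0}\frac1{k!}N^{k}=\exp(N)$ and therefore $\log\mathrm{M}=N=\Sum_{w\in Y^*}\calM(w)\,\pi_1(w)$. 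Since each $\pi_1(u)$ is primitive (its image being $\mathrm{Prim}(\calH_{\ministuffle})$) and hence has zero constant term, $N$ is primitive with no constant term, which legitimizes $\log\exp(N)=N$ and agrees with Proposition \ref{M}.

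The step that needs real care — the main obstacle — is the bookkeeping that lets $\calM$ pass through the infinite product and the two infinite sums: one must note that $\calM$ preserves the weight, that (\ref{factorization2}) groups into finitely many factors of each weight, and that (\ref{exp1}) is locally finite in each weight, so that every rearrangement takes place inside finite-dimensional homogeneous components. The other point to verify explicitly is the compatibility of the two multiplications under the identification ${\bf QSym_k}\hat\otimes\calH_{\ministuffle}\cong{\bf QSym_k}\series{Y}$, namely that the quasi-shuffle on the left leg is carried by $\calM$ exactly to the commutative product $\star$ of ${\bf QSym_k}$; this is precisely the content of $\calM$ being an algebra isomorphism and is what makes both the exponentials and the recognition of $N^k$ legitimate.
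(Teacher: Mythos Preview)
Your proposal is correct and takes essentially the same approach as the paper: both obtain the first identity by applying $\calM\otimes\mathrm{Id}$ to the factorization (\ref{factorization2}), and for the second identity the paper simply says it is the image of $\calD_{\ministuffle}$ under $\calM\otimes\pi_1$, which is exactly what your explicit computation via (\ref{exp1}) unpacks. Your treatment of the second identity is in fact more detailed than the paper's one-line justification, but the underlying idea is identical.
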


\begin{proof}
The first identity is equivalent  to the image of the diagonal series $\calD_{\ministuffle}$ by the tensor $\calM\otimes\mathrm{Id}$.

The second one is then equivalent to the image of $\log\mathrm{M}$ by the tensor $\mathrm{Id}\otimes\pi_1$.
It  is also equivalent to the image of $\calD_{\ministuffle}$ by the tensor $\calM\otimes\pi_1$.
\end{proof}

Finally, using (\ref{factorization2}) we deduce the following property which completes the formulae (120) given in \cite{GKLLRT}~:

\begin{corollary}
We have
$$\begin{array}{rrcccl}
&\Sum_{w\in Y^*}\calM(w)\;\calS(w)
&=&\Prod_{l\in\Lyn Y}^{\searrow}\exp(\calM(\Sigma_l)\;\calS(\Pi_l)
&=&\Prod_{l\in\Lyn Y}^{\searrow}\exp(\calM(\Sigma^{(S)}_l)\;\calS(\Pi^{(S)}_l),\\
\iff&\Sum_{w\in Y^*}M_w\;S_w
&=&\Prod_{l\in\Lyn Y}^{\searrow}\exp(M_{\Sigma_l}\;S_{\Pi_l})
&=&\Prod_{l\in\Lyn Y}^{\searrow}\exp(M_{\Sigma^{(S)}_l}\;S_{\Pi^{(S)}_l}).
\end{array}$$
\end{corollary}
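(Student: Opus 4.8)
The plan is to derive both equalities by pushing the extended Sch\"utzenberger factorization \pref{factorization2} of the diagonal series $\calD_{\ministuffle}$ through the tensor map $\calM\otimes\calS$. Since $\calD_{\ministuffle}=\Sum_{w\in Y^*}w\otimes w$ lives in $\calH_{\ministuffle}^{\vee}\hat\otimes\calH_{\ministuffle}$, the left tensor slot carries the quasi-shuffle product $\stuffle$ and the right slot the concatenation product $\bullet$. Applying $\calM\otimes\calS$ term by term to the diagonal sum produces at once the left-hand member $\Sum_{w\in Y^*}\calM(w)\,\calS(w)$, where juxtaposition stands for the tensor product in ${\bf QSym_k}\hat\otimes{\bf Sym_k}$.

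First I would record that $\calM\otimes\calS$ is a continuous algebra morphism for the product of $\calH_{\ministuffle}^{\vee}\hat\otimes\calH_{\ministuffle}$. Indeed, by Theorem \ref{isomorphisms}, $\calM$ is a Hopf algebra isomorphism sending $\stuffle$ to $\star$ and $\calS$ is one sending $\bullet$ to $\bullet$; hence on pure tensors $(\calM\otimes\calS)\bigl((a\stuffle c)\otimes(b\bullet d)\bigr)=\bigl(\calM(a)\star\calM(c)\bigr)\otimes\bigl(\calS(b)\,\calS(d)\bigr)$. Because both maps preserve the weight grading and every weight-homogeneous component is finite-dimensional, the map extends to the completion and commutes with the weight-convergent ordered product $\Prod_{l\in\Lyn Y}^{\searrow}$.

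Next I would transport a single exponential factor. Expanding inside $\calH_{\ministuffle}^{\vee}\hat\otimes\calH_{\ministuffle}$,
\begin{eqnarray*}
\exp(\Sigma_l\otimes\Pi_l)=\Sum_{n\ge0}\frac1{n!}\,(\Sigma_l^{\stuffle n})\otimes(\Pi_l^{\bullet n}),
\end{eqnarray*}
and since $\Sigma_l\otimes\Pi_l$ is of positive weight (so the series is degreewise finite), the morphism property gives
\begin{eqnarray*}
(\calM\otimes\calS)\exp(\Sigma_l\otimes\Pi_l)
=\Sum_{n\ge0}\frac1{n!}\,\calM(\Sigma_l)^{\star n}\otimes\calS(\Pi_l)^{\bullet n}
=\exp\bigl(\calM(\Sigma_l)\,\calS(\Pi_l)\bigr).
\end{eqnarray*}
Reassembling over $l\in\Lyn Y$ in decreasing order yields the first equality. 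The second is proved identically, starting instead from the factorization $\calD_{\ministuffle}=\Prod_{l\in\Lyn Y}^{\searrow}\exp(\Sigma^{(S)}_l\otimes\Pi^{(S)}_l)$ supplied by the previous corollary; replacing $(\Sigma_l,\Pi_l)$ by $(\Sigma^{(S)}_l,\Pi^{(S)}_l)$ throughout gives $\Prod_{l\in\Lyn Y}^{\searrow}\exp\bigl(\calM(\Sigma^{(S)}_l)\,\calS(\Pi^{(S)}_l)\bigr)$. The last line of the statement is only the rewriting $\calM(w)=M_w$, $\calS(w)=S_w$ together with the abbreviations $M_{\Sigma_l}$, $S_{\Pi_l}$ for the Lyndon factors.

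The step I expect to be the main obstacle is the justification of continuity and of the interchange of $\calM\otimes\calS$ with the ordered decreasing infinite product $\Prod^{\searrow}$: one must confirm that the map genuinely extends to $\calH_{\ministuffle}^{\vee}\hat\otimes\calH_{\ministuffle}$ and that, degree by degree, only finitely many Lyndon factors and finitely many terms of each $\exp$ contribute. This is precisely where the finite-dimensionality of the weight-homogeneous components is indispensable; granted this, the per-factor identity displayed above is purely formal.
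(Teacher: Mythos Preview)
Your proof is correct and follows essentially the same approach as the paper: both obtain the identity as the image of the diagonal series $\calD_{\ministuffle}$ (in its two factorized forms) under the tensor map $\calM\otimes\calS$, invoking Theorem~\ref{isomorphisms} to justify that this map is a morphism. You simply make explicit the details the paper leaves implicit---the termwise action on each exponential factor and the continuity/grading argument allowing passage through the infinite ordered product.
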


\begin{proof}
By Theorem \ref{isomorphisms}, it is equivalent to the image of the diagonal series $\calD_{\ministuffle}$ by the tensor $\calM\otimes\calS$, or equivalently it is equivalent to the image of the power series $\mathrm{M}$ by the tensor $\mathrm{Id}\otimes\pi_1$.
\end{proof}

Note that these formulas are universal for any pair of bases in duality, compatible with monoidal factorization of $Y^*$, and they do not depend on the specific alphabets, usually denoted by $A$ and $X$, used to define $S(A)\in{\bf Sym_k}(A)$ and $M(X)\in{\bf QSym_k}(X)$.

\begin{example}[Cauchy type identity, \cite{GKLLRT}]
Let $A$ be a noncommutative alphabet and $X$ a totally ordered commutative alphabet. The symmetric functions of the noncommutative alphabet $XA$ are defined by means of
$$\begin{array}{rllcr}
\sigma(XA;t)&=&\Sum_{n\geq 0}S_n(XA)t^n&:=&\Prod_{x\in X}^{\leftarrow}\sigma(A;xt).
\end{array}$$
Let $\{U_I\}_{I\in(\N_+)^*}$ and $\{V_I\}_{I\in(\N_+)^*}$ be two linear bases of
${\bf Sym_k}(A)$ and ${\bf QSym_k}(X)$ respectively.
The duality of these bases means that\footnote{{\it i.e.} the  formulae (120) given in \cite{GKLLRT}.}
\begin{eqnarray*}
\sigma(XA;1)=\Sum_{I\in(\N_+)^*}M_I(X)S^I(A)
=\Sum_{I\in(\N_+)^*}V_I(X)\;U_I(A).
\end{eqnarray*}

Typically, the linear basis $\{U_I\}_{I\in(\N_+)^*}$ is the basis of {\em ribbon} Schur functions $\{\mathrm{R}_I\}_{I\in(\N_+)^*}$, and, by duality, $\{V_I\}_{I\in(\N_+)^*}$ is the basis of {\em quasi-ribbon} Schur functions $\{\mathrm{F}_I\}_{I\in(\N_+)^*}$~:
\begin{eqnarray*}
\sigma(XA;1)
=\Sum_{I\in(\N_+)^*}M_I(X)\biggl[\Sum_{I,J\in(\N_+)^*\atop J\preceq I}\mathrm{R}_I(A)\biggr]
=\Sum_{J\in(\N_+)^*}\biggl[\Sum_{I,J\in(\N_+)^*\atop I\succeq J}M_I(X)\biggr]\mathrm{R}_J(A)
=\Sum_{J\in(\N_+)^*}\mathrm{F}_J(X)\mathrm{R}_J(A).
\end{eqnarray*}
\end{example}

Also, if one specializes the alphabets of the quasi-symmetric functions $\{M_I\}_{I\in(\N_+)^*}$ and $\{F_I\}_{I\in(\N_+)^*}$ to the commutative alphabet $X_q=\{1, q, q^2, \ldots\}$, then the generating series $\sigma(X_qA;t)$ can be viewed as the image of the diagonal series $\calD_{\ministuffle}$ by the tensor $f\otimes{\cal S}$, where $f:x_i\mapsto q^it$, and one has

\goodbreak

\begin{example}[Generating series of the analog Hall-Littlewood functions, \cite{GKLLRT}]
Let $X_q = 1/(1-q)$ denotes the totally ordered commutative alphabet $X_q = \{\dots < q^n < \dots < q < 1\}$.  The complete symmetric functions of the noncommutative alphabet $A/(1-q)$ are given by the following ordinary generating series
\begin{eqnarray*}
\sigma(\Frac{A}{1-q};t)=\Sum_{n\geq 0}S_n(\Frac{A}{1-q})t^n:=\Prod_{n\geq 0}^{\leftarrow}\sigma(A;q^nt).
\end{eqnarray*}
Hence, 
\begin{eqnarray*}
\sigma(\Frac{A}{1-q};1)
=\Prod_{n\geq 0}^{\leftarrow}\sum_{i\ge0}S_i\;q^{ni}
=\sum_{I=(i_1,\ldots,i_r)\in(\N_+)^*}\bigg[\sum_{n_1>\ldots>n_r\ge1}q^{n_1i_1+\ldots+n_ri_r}\biggr]S^I(A)
=\sum_{I\in(\N_+)^*}M_I(X)S^I(A),
\end{eqnarray*}
by specializing each letter $x_i\in X$ to $q^i$  in the quasi-symmetric function $M_I(X)$.
\end{example}

\section{Conclusion}
Once again, the Sch\"utzenberger's monoidal factorization plays a central role in the construction of pairs of bases in duality, as exemplified for the (mutually dual) Hopf algebras of quasi-symmetric functions (${\bf QSym_k}$) and of noncommutative symmetric functions (${\bf Sym_k}$), obtained as isomorphical images of the quasi-shuffle Hopf algebra ($\calH_{\ministuffle}$)  and its dual ($\calH_{\ministuffle}^{\vee}$), by $\calM$ and $\calS$ respectively.

\end{document}